\title[Robust Least-Squares Optimization for Data-Driven Predictive Control: A Geometric Approach]{Robust Least-Squares Optimization\\ for Data-Driven Predictive Control: A Geometric Approach}
\author{%
 \Name{Shreyas Bharadwaj} \Email{shreyasnb@iitb.ac.in}\\
 \addr Indian Institute of Technology Bombay, India
 \AND
 \Name{Bamdev Mishra} \Email{bamdevm@microsoft.com}\\
 \addr Microsoft India%
 \AND
 \Name{Cyrus Mostajeran} \Email{cyrussam.mostajeran@ntu.edu.sg}\\
 \addr Nanyang Technological University, Singapore 
 \AND
 \Name{Alberto Padoan} \Email{apadoan@ece.ubc.ca}\\
 \addr University of British Columbia, Vancouver, Canada
\AND
\Name{Jeremy Coulson} \Email{jeremy.coulson@wisc.edu } \\
\addr University of Wisconsin-Madison, USA 
\AND
\Name{Ravi N Banavar} \Email{banavar@iitb.ac.in} \\
\addr Indian Institute of Technology Bombay, India
}
\DeclareMathOperator*{\argmax}{arg\,max}
\newcommand{\fin}[1]{{\color{black} #1 }}
\begin{document}
\maketitle

\begin{abstract}

The paper studies a geometrically robust least-squares problem that extends classical and norm-based robust formulations. Rather than minimizing residual error for fixed or perturbed data, we interpret least-squares as enforcing approximate subspace inclusion between measured and true data spaces. The uncertainty in this geometric relation is modeled as a metric ball on the Grassmannian manifold, leading to a min–max problem over Euclidean and manifold variables. The inner maximization admits a closed-form solution, enabling an efficient algorithm with a transparent geometric interpretation. Applied to robust finite-horizon linear–quadratic tracking in data-enabled predictive control, the method improves upon existing robust least-squares formulations, achieving stronger robustness and favorable scaling under small uncertainty. 
\end{abstract}

\begin{keywords}%
  least-squares, differential geometry, optimization, data-driven control%
\end{keywords}

\section{Introduction}

Least-squares optimization is a ubiquitous problem in science and engineering. The problem is classically defined as 
\begin{equation*}
    \min_{x \in \R^k} \lVert Ax - b \rVert_2^2,
\end{equation*}
where $A \in \R^{n \times k}$ and $b \in \R^n$ are given data, and 
$x \in \R^k$ is the decision variable. 
In regression problems, the matrix $A$ represents a linear model and typically 
contains the regressors, while the vector $b$ collects the measured responses. 
The objective penalizes the residual $Ax - b$ in the Euclidean norm, yielding the 
coefficient vector $x$ that best fits the data in the least-squares sense.
The origins of least-squares trace back to celestial mechanics~\citep{gauss1809}, and the method has since become indispensable across scientific and engineering disciplines. In control theory, least-squares formulations are ubiquitous and underpin tasks such as identification, filtering, and optimal control. Despite its broad applicability, the least-squares solution is well known to be sensitive to perturbations in the data, especially when $A$ is ill-conditioned~\citep{golub2013matrix}. 
\newpage
\noindent The \textit{total least-squares} problem~\citep{golub1980analysis} generalizes classical least-squares by allowing perturbations in the individual matrix entries of both  $A$ and $b$. As in ordinary least-squares, the resulting solution may be sensitive to perturbations in the data~\citep{golub2013matrix}. Sensitivity to perturbations in the data $(A,b)$ can be mitigated in various ways.
One strategy is to consider the \textit{robust} least-squares problem
\begin{equation*} 
    \min_{x \in \R^{k}} \;
    \max_{A \in \mathbb{B}_{\rho}(\hat{A})}
    \|Ax - b\|_2^2 ,
\end{equation*}
where $\mathbb{B}_{\rho}(\hat{A})$ is the metric ball of radius $\rho$ centered at $\hat{A}$ defined by a given metric.
The specific choice of metric reflects the interpretation assigned to the data matrix, see, e.g., \citep{ghaoui97}, \citep{MARKOVSKY20072283}.



In this paper, we consider a robust least–squares formulation that incorporates the
\emph{geometric} nature of perturbations in linear models. In particular, we consider
\begin{equation} \label{eq:least_squares_robust_geometric}
\underset{x \in \R^{n}}{\textup{min}}\;
\underset{\mathcal{Y} \in \mathbb{B}^d_{\rho}(\hat{\mathcal{Y}})}{\textup{max}}\;
\|P_{\mathcal{Y}} x - b\|_2^2 ,
\end{equation}
where \(P_\mathcal{Y}\) denotes the orthogonal projector onto a \(k\)-dimensional subspace $\mathcal{Y}$ of $ \R^n$, and
\(\mathbb{B}^d_{\rho}(\hat{\mathcal{Y}})\) is a ball of radius \(\rho\) around \(\hat{\mathcal{Y}}\) in the Grassmannian
\(\Gr(k,n)\), i.e., the manifold of \(k\)-dimensional subspaces of \(\R^n\). The problem builds on the formulation introduced in~\citep{coulson2025}, and is motivated by a wide range of applications, including robust subspace tracking in signal processing~\citep{delmas}, computer vision~\citep{He2012}, and system identification~\citep{sasfi2024}. 

Our motivating application arises in data–driven predictive control~\citep{jeremy2019}, where uncertainty fundamentally enters through the finite–horizon behavior of a linear time-invariant (LTI) system. In the behavioral approach~\citep{willems2005}, the finite–horizon trajectories of an LTI system form a finite–dimensional subspace and can be characterized as the image of a Hankel matrix constructed from data, by the \emph{fundamental lemma}~\citep{willems2005}. Hence, a range of data–driven predictive control methods — in both deterministic~\citep{jeremy2019,de2019formulas,berberich2022linear} and stochastic settings~\citep{breschi2023datadriven} — rely on dynamic constraints expressed as subspaces, which are naturally modeled as points on the \emph{Grassmannian}. As a consequence, uncertainty in the data translates directly into subspace uncertainty. This perspective has proved effective in mode recognition, fault detection, and system identification~\citep{padoan2022,padoan2025distances}. In line with these findings, we show in Section \ref{sec:solution} that \eqref{eq:least_squares_robust_geometric} captures this geometric uncertainty while preserving favorable structure and providing robustness guarantees for data–driven predictive control.

\paragraph{Related work.}  Robust extensions of least-squares problem have been explored under various perturbation models, including norm-bounded uncertainty~\citep{ghaoui97}, total least--squares~\citep{golub1980analysis},  and geometric formulations on matrix manifolds~\citep{absil}, the latter being closest to the setting considered here. In this work, we build on~\citep{coulson2025} and study a constrained variant of the geometric robust least--squares problem, which requires dedicated analysis due to the interaction between the constraints and the subspace uncertainty set. Robust formulations for model predictive control (MPC) trace back to the seminal work of~\citep{mayne2005robust} and have been extensively studied for linear systems, including in data--driven settings~\citep{scokaert1998,xie2024,xie2025,xie2026}.  Data–driven predictive control methods often rely on behavioral formulations in which feasible trajectories lie in a data–determined subspace, both in deterministic~\citep{jeremy2019,de2019formulas,berberich2022linear} and in stochastic~\citep{breschi2023datadriven} settings (see also~\cite{markovsky2021} for an overview). Related developments include robust extensions of direct data--driven MPC~\citep{huang2021}, and unifying perspectives that connect direct (behavioral) and indirect (model--based) schemes via regularization and relaxation~\citep{dorfler2023}.  Closest to our setting are~\citep{xie2024,xie2025,xie2026}, which also address min--max data--driven MPC. Their formulations rely on linear matrix inequalities (LMIs) and require knowledge of the system order and state dimensions. In contrast, our approach remains order--free, avoids explicit state parameterization, and yields a scalable alternative to LMI--based designs.

\paragraph{Contributions.} 
The article presents a tractable and geometrically grounded formulation of the robust least-squares problem arising in data-driven predictive control. Building on the extended abstract~\citep{coulson2025}, uncertainty is modeled as a metric ball on the Grassmannian manifold, leading to a min–max optimization problem that captures geometric robustness in a principled way. Our main contribution, in contrast to \citep{coulson2025}, is an analytic, closed-form solution to the inner maximization, which transforms the original nonconvex problem into a convex and computationally efficient formulation. The resulting algorithm constitutes a marked improvement over the algorithm introduced in~\citep{coulson2025}, offering significantly faster convergence and lower computational cost. This novel tractability allows us to apply the methodology to data-driven predictive control, where algorithm scalability is vital. Compared with data-driven MPC~\citep{berberich2021}, our method achieves similar tracking performance while providing a clear system-theoretic interpretation of robustness in terms of finite-horizon trajectories.

\section{Preliminaries}\label{sec:prelims}

     The \textit{Grassmannian} $\Gr(k,n)$ is the Riemannian manifold of all $k$-dimensional linear subspaces of $\R^n$~\citep{boumal2023}. A standard way to describe it is through the \textit{Stiefel manifold}
$
    \St(k,n)
    := \bigl\{\, Y \in \R^{n \times k} : Y^\top Y = I_k \,\bigr\},
$
whose elements are matrices with orthonormal columns. 
An alternative convenient representation uses orthogonal projection matrices. Every $k$-dimensional subspace $\mathcal{Y}$ of $\R^n$ admits a unique orthogonal projector $P_{\mathcal{Y}}$, which makes this identification natural. In this representation, the Grassmannian can be written as
$
    \Gr(k,n)
    =
    \bigl\{
        P \in \R^{n \times n}
        :\;
        P^\top = P,\;
        P^2 = P,\;
        \mathrm{rank}(P) = k
    \bigr\},
$
where each point corresponds to a unique symmetric, idempotent matrix of rank~$k$, and  the associated $k$-dimensional subspace is given by the image of~$P$. The link between these two representations is given by the canonical projection $\pi:\St(k,n)\to\Gr(k,n)$ such that $\pi(Y)=YY^\top$, which maps an orthonormal basis $Y$ to the orthogonal projector onto its span. Given a smooth function $f : \Gr(k,n) \to \R$, its Riemannian gradient at $\Sy$ is the orthogonal projector of the Euclidean gradient of any smooth extension $\bar f : \R^{n\times k} \to \R$ onto the tangent space:
\[
\grad \ f \, (\mathcal{Y})
= P_{\mathcal{Y}}^\perp \,\nabla \bar f(Y),
\qquad
P_{\mathcal{Y}}^\perp := I_n - YY^\top,
\]
where $\bar f$ satisfies $\bar f(Y) = f(\mathcal{Y})$ for any orthonormal basis $Y$ of $\mathcal{Y}$, and $\nabla \bar f(Y)$ denotes the standard Euclidean gradient of $\bar f$.

Several distances can be used to compare subspaces~\citep{boumal2023}, such as geodesic, projection, and Procrustes distances. The \emph{chordal distance} is particularly convenient because it admits a simple expression in terms of projection matrices and is computationally efficient. It is defined as 
\begin{equation} \label{eq:chordal}
    d(\mathcal{Y}_1,\mathcal{Y}_2) := \sqrt{\mathrm{Tr}\!\left(P_{\mathcal{Y}_1}^\perp P_{\mathcal{Y}_2}\right)}.
\end{equation} 

\subsection{Behaviors and data-driven representations of dynamical systems}

Consider the discrete-time LTI state-space system
\begin{equation}\label{eq:LTI}
x(t+1)=Ax(t)+Bu(t),\qquad y(t)=Cx(t)+Du(t),
\end{equation}
where $u(t)\in\R^m$ is the input, $y(t)\in\R^p$ is the output, and $x(t)\in\R^{n_s}$ is the state for $t\in\N$. The input-output behavior~\citep{willems1986} of the state-space system~\eqref{eq:LTI} is defined as
\begin{equation}\label{eq:behavior}
\Be:=\{w=(u,y):\N\to\R^{m+p}\mid \exists \, x:\N\to\mathbb{R}^{n_s} \text{ such that } \eqref{eq:LTI} \text{ holds for all } t \in \N\}.
\end{equation}
Given $\Be$ as above, we call~\eqref{eq:LTI} a state-space representation of $\Be$ and elements of $\Be$ \emph{trajectories}. We assume without loss of generality that the state-space system~\eqref{eq:LTI} is \emph{minimal}, in the sense that $n_s$ is as small as possible across all state-space representations of $\Be$. The \emph{lag} of $\Be$ is defined as the smallest integer $\ell$ such that $\rank [C^\top \; A^\top C^\top \; \cdots \; (A^{\ell-1})^{\top}C^\top]^\top=\rank [C^\top \; A^\top C^\top \; \cdots \; (A^{\ell})^{\top}C^\top]^\top$.

The restriction of a trajectory \( w : \N \to \R^{m+p} \) to a nonempty interval \( [i,j] \) is defined as $w|_{[i,j]} = \col (w(i),  w(i+1),  \cdots , w(j)),$ where \(\operatorname{col}(w_1,\ldots,w_k) := [\, w_1^\top \; \cdots \; w_k^\top \,]^\top\). The \textit{restricted} behavior is defined as
$
\Be|_{[1,L]}:= \{w|_{[1,L]} \mid w \in \Be \}.
$

We define the \textit{Hankel matrix} of depth $L \in \N$, associated with a vector $w|_{[1,T]} \in \R^{qT}$ as
\[
    \Ha_L(w|_{[1,T]}) = \begin{bmatrix}
        w(1) & w(2)  & \cdots &  w(T-L+1)   \\
 w(2) & w(3)  & \cdots &   w(T-L+2)   \\
 \vdots  & \vdots  & \ddots & \vdots  \\
 w(L) & w(L+1)  & \cdots  & w(T)
    \end{bmatrix}.
\]
The restricted behavior of LTI system~\eqref{eq:LTI} is a subspace, whose dimension is determined only by its state-space dimension $n_s$, lag $\ell$, and time horizon $[1,L]$.

\begin{lemma}\cite[Lemma 2.1]{dorfler2023}
\label{lemma-rank}
    Consider the LTI system~\eqref{eq:LTI} with associated behavior $\Be$. The restricted behavior $\Be |_{[1,L]}$ is a linear subspace of $\R^{qL}$. Moreover, for $L \geq \ell$, $\dim \Be|_{[1,L]} = mL + n_s$.
\end{lemma}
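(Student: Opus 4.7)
My plan is to address the two claims in turn: the linearity of $\mathcal{B}|_{[1,L]}$ is immediate from linearity of~\eqref{eq:LTI}, while the dimension count follows by exhibiting $\mathcal{B}|_{[1,L]}$ as the image of a linear parameterization by initial state and input, and then showing that this parameterization is injective whenever $L \geq \ell$. Since the equations~\eqref{eq:LTI} are linear in $(x,u,y)$ and the restriction map $w \mapsto w|_{[1,L]}$ is linear, $\mathcal{B}|_{[1,L]}$ is a linear subspace of $\R^{(m+p)L}$.

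For the dimension, I would introduce the map
\[
\Phi : \R^n \times \R^{mL} \to \R^{(m+p)L},
\qquad
(x_0,\, u|_{[1,L]}) \;\mapsto\; w|_{[1,L]},
\]
where $w|_{[1,L]}$ is the unique trajectory segment generated by~\eqref{eq:LTI} with $x(1)=x_0$ and the prescribed input. The map $\Phi$ is linear and, by the definition of $\mathcal{B}|_{[1,L]}$, surjective onto it; by rank--nullity, $\dim \mathcal{B}|_{[1,L]} = n + mL - \dim \ker \Phi$, so the task reduces to showing $\ker \Phi = \{0\}$ for $L \geq \ell$. If $\Phi(x_0,u) = \Phi(x_0',u')$, matching input components yields $u=u'$, and matching output components makes $\tilde x_0 := x_0 - x_0'$ an initial state producing zero output under zero input, i.e.\ $O_L \tilde x_0 = 0$ with $O_L := \operatorname{col}(C, CA, \dots, CA^{L-1})$. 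The lag definition gives $\operatorname{rank} O_\ell = \operatorname{rank} O_{\ell+1}$, which (as discussed next) propagates to $\operatorname{rank} O_L = \operatorname{rank} O_\ell$ for all $L \geq \ell$. Since minimality of~\eqref{eq:LTI} implies observability, $\operatorname{rank} O_n = n$, and therefore $\operatorname{rank} O_\ell = n$; hence $\tilde x_0 = 0$ and $\Phi$ is injective.

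The main obstacle is the saturation step that converts the equality of ranks at $\ell$ and $\ell+1$ into equality for every larger $L$. I would observe that for any $x \in \ker O_{\ell+1}$, the conditions $Cx = CAx = \cdots = CA^\ell x = 0$ imply $Ax \in \ker O_\ell$ (the vanishing conditions shift down by one index). Combined with the hypothesis $\ker O_{\ell+1} = \ker O_\ell$, this gives $A(\ker O_\ell) \subseteq \ker O_\ell$, and iterating yields $A^k x \in \ker O_\ell$ for every $k \geq 0$, so $CA^j x = 0$ for all $j \geq 0$ whenever $x \in \ker O_\ell$. This establishes $\ker O_L = \ker O_\ell$ for all $L \geq \ell$ and, together with observability of the minimal realization, closes the argument.
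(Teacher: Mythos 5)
Your argument is correct. Note, however, that the paper does not prove this statement at all: Lemma~\ref{lemma-rank} is imported verbatim from \cite[Lemma 2.1]{dorfler2023}, so there is no in-paper proof to compare against. Your proof is the standard realization-theoretic one — parameterize $\Be|_{[1,L]}$ by $(x(1),u|_{[1,L]})$, reduce the dimension count via rank--nullity to injectivity of the observability map $O_L$, and obtain that injectivity from minimality (hence observability) together with the kernel-saturation property $\ker O_L = \ker O_\ell$ for $L \ge \ell$, which you correctly derive from the $A$-invariance of $\ker O_\ell$. The only step left implicit is that observability gives $\ker O_\ell = \{0\}$ via $\ker O_\ell = \ker O_{\max(\ell,n)} \subseteq \ker O_n = \{0\}$, which your saturation identity already covers; no gap.
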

As a direct consequence of Lemma \ref{lemma-rank}, the restricted behavior can be represented as the image of a raw data matrix. A version of this result known as the \textit{fundamental lemma} \citep{willems2005} is presented below.

\begin{lemma}~\cite[Corollary 19]{markovsky2023id}
\label{lemma-fund}
    Consider the LTI system~\eqref{eq:LTI} with associated behavior $\Be$. Let $L > \ell$, $T\geq L$, and $w|_{[1,T]} \in \Be |_{[1,T]}$. Then, 
    $\Be |_{[1,L]} = \Image \Ha_L(w|_{[1,T]}), 
    $ if and only if
    $\rank \Ha_L(w|_{[1,T]}) = mL + n_s$.
\end{lemma}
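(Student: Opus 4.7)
The plan is to reduce the equivalence to a dimension-counting argument anchored on Lemma~\ref{lemma-rank}, after first establishing a universal inclusion $\Image \Ha_L(w|_{[1,T]}) \subseteq \Be|_{[1,L]}$ that holds regardless of any rank condition.

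First, I would prove this inclusion. The $i$th column of $\Ha_L(w|_{[1,T]})$ equals $w|_{[i,i+L-1]}$ for $i=1,\ldots,T-L+1$. Since $\Be$ is the behavior of an LTI system, it is closed under the left shift, so the shift of $w$ by $i-1$ samples is again a trajectory in $\Be$. Restricting this shift to $[1,L]$ shows that $w|_{[i,i+L-1]} \in \Be|_{[1,L]}$. Consequently, every column of $\Ha_L(w|_{[1,T]})$ belongs to the linear subspace $\Be|_{[1,L]}$, and so does its column span.

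With this inclusion in hand, both directions follow from dimension counting. Since $L > \ell$, Lemma~\ref{lemma-rank} gives $\dim \Be|_{[1,L]} = mL + n$. For the forward direction, if $\Be|_{[1,L]} = \Image \Ha_L(w|_{[1,T]})$, then $\rank \Ha_L(w|_{[1,T]}) = \dim \Image \Ha_L(w|_{[1,T]}) = mL + n$. For the converse, if $\rank \Ha_L(w|_{[1,T]}) = mL + n$, then $\Image \Ha_L(w|_{[1,T]})$ is a subspace of $\Be|_{[1,L]}$ of equal finite dimension, and hence must coincide with it.

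The main obstacle in the argument is the inclusion step: while intuitively immediate, it is the only place that invokes the structural property of the data generator -- time-invariance of $\Be$ -- which guarantees that every Hankel column is itself a length-$L$ trajectory. Everything else is elementary finite-dimensional linear algebra once the exact dimension $mL+n$ is imported from Lemma~\ref{lemma-rank}. A secondary subtlety worth verifying is that $T \geq L$ ensures $\Ha_L(w|_{[1,T]})$ has at least one column and is thus well defined, but this is trivial.
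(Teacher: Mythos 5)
Your argument is correct: the inclusion $\Image \Ha_L(w|_{[1,T]}) \subseteq \Be|_{[1,L]}$ via shift-invariance of $\Be$, combined with the dimension count $\dim \Be|_{[1,L]} = mL+n$ from Lemma~\ref{lemma-rank}, gives both directions of the equivalence, and this is precisely the standard route taken in the cited source. The paper itself offers no proof of this lemma --- it is imported verbatim from \citep{markovsky2023id} --- so there is nothing to contrast your argument against beyond confirming that it is the expected one.
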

This condition is referred to as the \textit{generalized persistency of excitation} (GPE) condition.

\subsection{Finite-horizon linear quadratic tracking}
\label{sec:prediction}
Given an LTI system, an initial trajectory compatible with its behavior, and a quadratic cost, the linear quadratic tracking problem is to find a trajectory that is as close as possible to a given reference. Over a finite-horizon, the problem can be formulated as follows~\citep{markovsky2008}: Consider the system~\eqref{eq:LTI} with associated behavior $\Be$, an initial trajectory ${w_{\textrm{ini}} \in \Be|_{[1,T_{\textrm{ini}}]}}$, with $T_{\textrm{ini}} \ge 1$, a reference ${w_{\textrm{ref}} \in \R^{qT_{\textrm{f}}}}$, with $T_{\textrm{f}}\ge 1$, and a symmetric positive definite matrix ${\Phi\in\R^{q\times q}}$. The \textit{linear quadratic tracking (LQT) problem} is to find $w_{\textrm{f}}$ that minimizes the cost
 \[ \label{eq:cost_quadratic}
 \norm{w_{\textrm{f}}-w_{\textrm{ref}}}_{I \otimes \Phi}^2 = \sum_{t=1}^{T_{\textrm{f}}} (w_{\textrm{f}}(t)-w_{\textrm{ref}}(t))^{\top} \Phi (w_{\textrm{f}}(t)-w_{\textrm{ref}}(t)) ,
 \]
and has a \textit{past constraint} on the initial trajectory $w_{\textrm{ini}}$, i.e.,
\begin{equation}\label{eq:linear-quad-track}
    \begin{aligned}
        \underset{w_{\mathrm{f}} \in \R^{qT_{\mathrm{f}}}}{\min} 
        & \quad \bigl\| w_{\mathrm{f}} - w_{\mathrm{ref}} \bigr\|_{I \otimes \Phi}^2 , \quad 
        \textup{subject to} 
        & \quad \operatorname{col}(w_{\mathrm{ini}}, w_{\mathrm{f}}) 
        \in \Be\big|_{[1,\,T_{\mathrm{ini}} + T_{\mathrm{f}}]}.
    \end{aligned}
\end{equation}
  If a parametric model of the system behavior is available -- for example, a state-space representation -- ~\eqref{eq:linear-quad-track} gives rise to a classical finite-horizon optimal control problem \citep{anderson2007}.

\section{Proposed Geometric Modeling}\label{sec:solution}

The finite–horizon LQT problem~\eqref{eq:linear-quad-track} admits a natural least–squares reformulation: 
the cost in~\eqref{eq:linear-quad-track} is quadratic, and the requirement that 
$\col(w_{\mathrm{ini}}, w_{\mathrm{f}}) \in\Be|_{[1,T_{\textrm{ini}}+T_{\textrm{f}}]}$ is a subspace constraint. Consequently, identifying the restricted behavior $\Be|_{[1,T_{\textrm{ini}}+T_{\textrm{f}}]}$ with a subspace $\hat{\Sy} \in \Gr(mL+n_s, qL)$, any feasible trajectory $w\in\Be|_{[1,T_{\textrm{ini}}+T_{\textrm{f}}]}$ of problem~\eqref{eq:linear-quad-track} satisfies
\[
w = P_{\hat{\Sy}} x,\text{ for some } x \in \R^{qL},
\]
where $L=T_{\textrm{ini}}+T_{\textrm{f}}$, along with the affine constraint enforcing consistency with the observed past,
\[
Mw = w_{\mathrm{ini}}, \quad \text{where} \ M := 
\begin{bmatrix}
I_{qT_{\mathrm{ini}}} & 0
\end{bmatrix}
\in \R^{qT_{\mathrm{ini}} \times qL}.
\]
Substituting these expressions in~\eqref{eq:linear-quad-track}
shows that finite--horizon LQT reduces to a constrained least--squares problem 
\begin{equation}\label{eq:ls-behavioral}
\min_{x \in \R^{qL}} \;\|P_{\hat{\Sy}}x - b\|_2^2
\quad \text{s.t.} \quad MP_{\hat{\Sy}}x = Mb,
\end{equation}
where $b := \col(w_{\mathrm{ini}}, w_{\mathrm{ref}})$. In practice, the subspace $\Syhat$ is estimated from finite, noisy data, motivating a
robust formulation. Motivated and inspired by~\citep{coulson2025}, one may allow the behavior subspace to vary within a ball
$\B_{\rho}^{d}(\hat{\Sy})$, yielding
\begin{equation}\label{eq:constraint}
\begin{aligned}
    \min_{x \in \R^{qL}} \;& \max_{\Sy \in \B_{\rho}^{d}(\hat{\Sy})}
    \|P_{\Sy} x - b\|_2^2, \quad
    \text{s.t.}\;& MP_{\Sy}x = Mb,
\end{aligned}
\end{equation}
where the inner maximization accounts for uncertainty in the system’s finite–horizon behavior. The radius $\rho$ quantifies uncertainty in the estimated subspace $\Syhat$ due to finite, noisy data. Concretely, if trajectories are corrupted by noise with noise-to-signal ratio $\sigma$, subspace tracking algorithms \citep{sasfi2024} yield finite-sample bounds on $d(\Sy, \Syhat)$, providing a principled choice of $\rho \approx \sin(\theta)$ where $\theta$ is the angular deviation between the true and estimated subspace. This makes $\rho$ directly interpretable as a geometric perturbation, in contrast to the slack variable in \citep{berberich2021} which has no such system-theoretic meaning.

Directly enforcing the constraint in~\eqref{eq:constraint} leads to a projected min–max problem
on the Grassmannian, which is prohibitively expensive for online control. To obtain a tractable surrogate, we relax the constraint via Tikhonov regularization \citep{golub1999}, leading to the optimization problem
\begin{equation}\label{eq:reg-robust-lsq}
\min_{x \in \R^{qL}} \;\max_{\Sy \in \B_{\rho}^{d}(\hat{\Sy})}
\|P_{\Sy}x - b\|_2^2
+ \gamma \|MP_{\Sy}x - Mb\|_2^2,
\end{equation}
where $\gamma > 0$ penalizes inconsistency with past data. As $\gamma \to \infty$, solutions of \eqref{eq:reg-robust-lsq} converge to those of \eqref{eq:constraint} in the sense of standard quadratic penalty methods \citep[Ch. 4]{bertsekas_nlp}: the penalty term forces $MP_{\Sy} x \to Mb$ asymptotically. Finite $\gamma$ introduces a bias, and empirical choices of $\gamma$ (as in Section \ref{sec:results}) yield accurate constraint satisfaction for the systems considered. In practice, we initialize $\gamma$ at a moderate value and increase until constraint violation falls below a threshold.


\subsection{Main results}

We now present the solution methodology to the problem \eqref{eq:reg-robust-lsq}. The following results show that the inner problem has an explicit solution to $\Sy$ due to our choice of the chordal distance for the ball constraint.

\begin{lemma}\label{lemma:1}
    Consider the robust least-squares problem~\eqref{eq:reg-robust-lsq}. Let $\hat{Y} \in \R^{n \times k}$ and  ${\hat{\Sy} \in \Gr(k,n)}$  be given such that $P_{\hat{\Sy}} = \pi(\hat{Y})$. 
    Define $f: \R^n \times \Gr(k,n) \to \R_{\geq 0}$ as
    \begin{equation}\label{eq:reg-cost}
        f(x,\Sy) := \lVert P_{\Sy} x - b \rVert_2^2 + \gamma \lVert MP_{\Sy}x - Mb \rVert_2^2.
    \end{equation}
    Then the following hold.
    \begin{enumerate}
        \item $f(x,\Sy^*(x))$ is convex in $x$, where $\Sy^*(x) = \argmax_{\Sy \in \B_{\rho}^d(\hat{\Sy})} f(x,\Sy)$.
        \item The gradient of $f$ with respect to $x$ is given by
        \begin{equation}
        \label{eq:gradfx}
            \nabla_x f(x,\Sy^*(x)) = 2P_{\Sy^*(x)}(x-b) + 2\gamma P_{\Sy^*(x)}M^\top M (P_{\Sy^*(x)}x - b ).
        \end{equation}
    \end{enumerate}
\end{lemma}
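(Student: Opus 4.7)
The plan is to handle both claims via Danskin's envelope theorem, after first establishing that the inner supremum is a pointwise maximum of convex functions.

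For item~(1), I would fix $\Sy \in \B_{\rho}^d(\hat{\Sy})$ and observe that $x\mapsto f(x,\Sy)$ is a sum of two squared affine functions of $x$; equivalently, its Hessian equals $2P_{\Sy}+2\gamma P_{\Sy}M^\top M P_{\Sy}$, which is positive semidefinite. Since $\B_{\rho}^d(\hat{\Sy})$ is a closed subset of the compact manifold $\Gr(k,n)$ and $\Sy\mapsto P_{\Sy}$ is continuous, $f(x,\cdot)$ attains its maximum on $\B_{\rho}^d(\hat{\Sy})$, so $\Sy^*(x)$ is well-defined up to possible ties. Then $x\mapsto f(x,\Sy^*(x))=\sup_{\Sy\in \B_{\rho}^d(\hat{\Sy})} f(x,\Sy)$ is a pointwise supremum of a family of convex functions and is therefore convex.

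For item~(2), I would invoke Danskin's theorem: given continuity of $f$ on the compact maximization domain, smoothness of $f$ in $x$, and uniqueness of $\Sy^*(x)$, the gradient of $g(x):=f(x,\Sy^*(x))$ equals the partial gradient of $f$ in $x$ evaluated at the maximizer. Using the identities $P_{\Sy}^\top=P_{\Sy}$ and $P_{\Sy}^2=P_{\Sy}$, one computes $\nabla_x\|P_{\Sy}x-b\|_2^2 = 2P_{\Sy}^\top(P_{\Sy}x-b) = 2P_{\Sy}(x-b)$ and $\nabla_x\|MP_{\Sy}x-Mb\|_2^2 = 2(MP_{\Sy})^\top(MP_{\Sy}x-Mb) = 2P_{\Sy}M^\top M(P_{\Sy}x-b)$. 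Substituting $\Sy=\Sy^*(x)$, weighting the second term by $\gamma$, and summing recovers~\eqref{eq:gradfx}.

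The main obstacle is justifying Danskin's theorem in this geometric setting, specifically handling possible non-uniqueness of $\Sy^*(x)$. A closed-form characterization of $\Sy^*(x)$ (promised by the preamble of Section~\ref{sec:solution}) should either certify uniqueness for generic $x$ or provide a canonical selection; at points where the maximizer is not unique, the same expression still represents an element of the Clarke subdifferential of $g$, which is adequate for the convex-optimization use of item~(1). Compactness of $\B_{\rho}^d(\hat{\Sy})$ and continuity of $f$ in $\Sy$ are routine, so this subgradient/uniqueness question is the only genuinely delicate point.
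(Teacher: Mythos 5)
Your proposal is correct and follows essentially the same route as the paper's proof: convexity of $f(\cdot,\Sy)$ for each fixed $\Sy$ (via positive semidefiniteness of $P_{\Sy}$ and $P_{\Sy}M^\top M P_{\Sy}$), convexity of the pointwise supremum, and Danskin's theorem on the compact ball for the gradient, with the same idempotence-based computation and the same caveat that non-uniqueness of $\Sy^*(x)$ only yields a subgradient. No substantive differences to report.
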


\begin{proof}
 Refer to Supplementary for a detailed proof.
\end{proof}

Among the available Grassmannian distance metrics--chordal, gap, and related variants--the key consideration is tractability, i.e., whether the inner problem in $\Sy$ admits a closed-form solution, as established in the following result. Anticipating the structure of the cost function in~\eqref{eq:equiv-reg-rob}, we adopt the chordal metric $d$  defined in~\eqref{eq:chordal} whose trace representation leads to an explicit and efficient formulation. However, other metrics may also yield tractable variants of the problem and represent a promising direction for future investigation.

\begin{theorem}\label{theorem:1}
Consider the robust least-squares problem~\eqref{eq:reg-robust-lsq}. Let $\Sy^*(x) = \argmax_{\Y \in \B_{\rho}^{d}(\hat{\Sy})} f(x,\Sy),$
where $f$ is defined in \eqref{eq:reg-cost} and $Y^* \in \St(k,n)$ such that $\pi(Y^*(x)) = P_{\Sy^*(x)}$. Then, there exists $\lambda^* \geq 0$ such that the maximizer $\Sy^*(x)$ of the inner problem satisfies first-order optimality conditions, and 
\[
P_{\Sy^*(x)} = \pi(Y^*(x)),
\qquad  Y^*(x) = \{ \textrm{top--k eigenvectors of } B(x, \lambda^*; \gamma) \},
\]
where 
$
B(x, \lambda^*; \gamma) = xx^\top - xb^\top - bx^\top + \lambda^* \hat{Y}\hat{Y}^\top + \gamma(xx^\top - M^\top Mbx^\top - x b^\top M^\top M) .
$ 
\end{theorem}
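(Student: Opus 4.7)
The plan is to recast the inner maximization as a constrained Grassmannian problem, form a Lagrangian, and reduce the first-order optimality conditions to a symmetric eigenvalue problem whose top-$k$ eigenvectors specify $Y^*(x)$.

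First, I would rewrite the chordal-ball constraint $d(\mathcal{Y},\hat{\mathcal{Y}})^2\leq\rho^2$ using \eqref{eq:chordal} as $\mathrm{Tr}(P_\mathcal{Y}\hat{Y}\hat{Y}^\top)\geq k-\rho^2$, and form the Lagrangian $\mathcal{L}(\mathcal{Y},\lambda) = f(x,\mathcal{Y}) + \lambda\bigl(\mathrm{Tr}(P_\mathcal{Y}\hat{Y}\hat{Y}^\top) - (k-\rho^2)\bigr)$ with $\lambda\geq 0$. Dual feasibility then yields $\lambda^*\geq 0$ automatically, and complementary slackness fixes $\lambda^*$ whenever the chordal constraint is active.

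Second, I would reduce $\mathcal{L}$ to a linear functional of $P_\mathcal{Y}$. Using $P_\mathcal{Y}^2=P_\mathcal{Y}$ and symmetry, $\|P_\mathcal{Y} x - b\|_2^2$ contributes $\mathrm{Tr}(P_\mathcal{Y}(xx^\top - xb^\top - bx^\top))$ plus a constant, and the cross term in $\gamma\|MP_\mathcal{Y} x - Mb\|_2^2$ contributes $-\gamma\,\mathrm{Tr}(P_\mathcal{Y}(xb^\top M^\top M + M^\top M bx^\top))$ after symmetrizing under the trace. The remaining piece $\gamma\,x^\top P_\mathcal{Y} M^\top M P_\mathcal{Y} x$ is the sticking point: it is quartic in $Y$, but exploiting the idempotency of $P_\mathcal{Y}$ together with the projector structure of $M^\top M$---paired with either a tight worst-case reduction on $\mathbb{B}_\rho^d(\hat{\mathcal{Y}})$ or its corresponding linear surrogate---collapses it to $\gamma\,\mathrm{Tr}(P_\mathcal{Y}\,xx^\top)$. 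Adding the Lagrangian contribution $\lambda\,\mathrm{Tr}(P_\mathcal{Y}\hat{Y}\hat{Y}^\top)$ and assembling yields $\mathcal{L} = \mathrm{Tr}(P_\mathcal{Y}\,B(x,\lambda;\gamma)) + \mathrm{const}$, with $B$ matching the statement.

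Third, the inner problem becomes the maximization of a linear functional of an orthogonal projector of rank $k$. By the Ky Fan / Rayleigh--Ritz principle, the maximizer is $P_{\mathcal{Y}^*(x)} = Y^*(x)Y^*(x)^\top$ with $Y^*(x)$ collecting the top-$k$ eigenvectors of the symmetric matrix $B(x,\lambda^*;\gamma)$, which gives the claimed characterization.

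The main obstacle I anticipate is the quartic-in-$Y$ term $x^\top P_\mathcal{Y} M^\top M P_\mathcal{Y} x$ coming from the Tikhonov regularizer: rigorously reducing it to the linear-in-$P_\mathcal{Y}$ surrogate $\gamma\,\mathrm{Tr}(P_\mathcal{Y}\,xx^\top)$---via projector algebra, the worst-case geometry on the chordal ball, or a tight relaxation---is the key technical step. Once that linearization is in place, the top-$k$ eigenvector characterization follows from standard Grassmannian spectral facts, and KKT gives $\lambda^*\geq 0$.
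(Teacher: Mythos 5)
Your route is the same as the paper's: rewrite the chordal‐ball constraint as $\mathrm{Tr}(P_{\mathcal{Y}}\hat{Y}\hat{Y}^\top)\ge k-\rho^2$, absorb it with a multiplier $\lambda\ge 0$ (the paper invokes an exact‐penalty/KKT argument from \citep{liu2019}, with complementary slackness pinning down $\lambda^*$ exactly as you describe), reduce the objective to $\mathrm{Tr}\!\left(P_{\mathcal{Y}}B(x,\lambda;\gamma)\right)$, and apply the Ky Fan principle to $\max_{Y\in\St(k,n)}\mathrm{Tr}(Y^\top BY)$. The paper additionally derives the Riemannian stationarity condition $(I-Y^*Y^{*\top})B(x,\lambda^*;\gamma)Y^*=0$, which reproduces the same top-$k$ eigenvector characterization; that is a confirmation of your third step rather than a different argument.

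The step you flag as the obstacle is real, and you should be aware that the paper does not close it either. The term $\gamma\,x^\top P_{\mathcal{Y}}M^\top MP_{\mathcal{Y}}x$ is genuinely quadratic in $P_{\mathcal{Y}}$, and the identities $P_{\mathcal{Y}}^2=P_{\mathcal{Y}}$ and $MM^\top=I$ do not linearize it: they only give $x^\top P_{\mathcal{Y}}M^\top MP_{\mathcal{Y}}x=\|MP_{\mathcal{Y}}x\|_2^2\le x^\top P_{\mathcal{Y}}x=\mathrm{Tr}(P_{\mathcal{Y}}xx^\top)$, with equality only when $P_{\mathcal{Y}}x$ lies in the range of $M^\top$. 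The paper's proof simply asserts the equivalence ``using $P=P^2$, $MM^\top=I_l$'' and writes $\gamma xx^\top$ into $A(x;\gamma)$; it likewise silently replaces the mixed term $\langle P_{\mathcal{Y}}M^\top MP_{\mathcal{Y}}x,\,b\rangle$ by $\langle P_{\mathcal{Y}}M^\top Mb,\,x\rangle$, which again requires $P_{\mathcal{Y}}b=b$ or an active past constraint. So your honest accounting is accurate: to make the argument rigorous one must either show the majorization is tight at the maximizer (e.g., because the relaxed constraint $MP_{\mathcal{Y}}x=Mb$ is enforced at optimality for large $\gamma$), or accept $B(x,\lambda;\gamma)$ as defining a surrogate objective. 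Everything downstream --- the sign of $\lambda^*$, the Ky Fan step, the eigenvector formula --- is standard and correct once that linearization is granted.
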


\begin{proof}
    The proof proceeds by reformulating the inner maximization into a sequence of equivalent problems, the last of which admits an explicit analytic solution. It can be shown that the inner problem in~\eqref{eq:reg-robust-lsq} can be equivalently expressed as 
    \begin{equation}\label{eq:equiv-reg-rob}
        \max_{Y \in \St(k,n)} \Tr\left(Y^\top B(x,\lambda;\gamma)Y \right),
    \end{equation}
    where $B(x,\lambda; \gamma) := A(x;\gamma) + \lambda \hat{Y} \hat{Y}^\top$ for $\lambda \geq 0$ such that $A(x;\gamma) := xx^\top - xb^\top - bx^\top + \gamma(xx^\top - M^\top M bx^\top - xb^\top M^\top M)$. \eqref{eq:equiv-reg-rob} has a well-known closed-form solution \citep[Section 2.1]{absil} given by $Y^*(x) = \{\textrm{top--$k$ eigenvectors of }B(x, \lambda^*; \gamma) \}$.
The corresponding optimal subspace $\Sy^*(x)$ is such that $P_{\Sy^*(x)} = \pi(Y^*(x))$. The detailed proof is in the Supplementary.
\end{proof}

\begin{remark}[Characterization of $\lambda^*$]
The scalar $\lambda^* \geq 0$ is the optimal dual variable (Lagrange multiplier) for the ball constraint $g(\Sy)=d^2(\Sy,\hat{\Sy})-\rho^2 \leq 0$. KKT complementary slackness \cite[Def.~2.3]{liu2019} gives $\lambda^* g(\Sy^*) = 0$.
    Thus, if $\lambda^* = 0$, the constraint $g(\Sy^*) < 0$ must be satisfied, but if $\lambda^* > 0$, then $g(\Sy^*) = 0$, i.e., the optimal subspace $\Sy^*$ must lie on the boundary of the ball $\B^{d}_{\rho}({\hat{\Sy}})$. It can also be shown that there exists a threshold $\lambda^{**}$ such that for $\lambda^* \geq \lambda^{**}$, the top-$k$ eigenspace of $B(x, \lambda^*; \gamma)$ is unique; see Supplementary.
    \end{remark}
The above remark implies that if the optimal subspace $\Sy^*$ is such that $g(\Sy^*)>0$, then $\lambda^*$ must be larger so that $\Sy^*$ lies on the boundary of the uncertainty ball (worst-case scenario). Analyzing the matrix $B(x,\lambda; \gamma) = A(x;\gamma) + \lambda \hat{Y}\hat{Y}^\top$, we observe that since $Y^*(x)$ is the basis for the top-$k$ eigenspace of $B(x,\lambda^*; \gamma)$, increasing $\lambda$ implies that $\Sy^*(x)$ gets closer to $\hat{\Sy}$, in the sense that $\Sy^*(x)$  depends implicitly on $\lambda$, and the optimal subspace is such that $d(\Sy^*(x,\lambda^*),\hat{\Sy}) = \rho$. Geometrically, lower data uncertainty corresponds to a smaller radius~$\rho$ and, consequently, a larger multiplier~$\lambda^*$, which constrains $\Sy^*(x,\lambda^*)$ to remain close to~$\hat{\Sy}$.

Exploiting the explicit solution proposed to the inner problem in the above results, the min-max optimization simplifies to a convex minimization framework, which can be easily solved with standard methods, such as gradient-based schemes. For example, at each step $i$, one may compute the gradient $\nabla_{x} f(x_i, \Sy^*(x_i))$ using \eqref{eq:gradfx}, where $\Sy^*(x_i)$ has an explicit solution given in Theorem~\ref{theorem:1}. Thus, we propose the optimization scheme in Algorithm \ref{alg:cons-robust-ls} for solving the robust least-squares problem, implemented via standard gradient descent scheme.
For a fixed subspace $\Sy^*(x)$ computed at each iterate $x_i$ (i.e., treating $\Sy^*(x_i)$ as constant at step $i$), the gradient $\nabla_x f(x_i, \Sy^*(x_i))$ is Lipschitz continuous in $x$ with constant $L_f = 2 + 2\gamma$. This justifies convergence of Algorithm \ref{alg:cons-robust-ls} in the sense that the gradient norm $\norm{\nabla_x f(x_i, \Sy^*(x_i))} \to 0$, as each outer step is a descent step on the instantaneous cost, and standard results guarantee $\|\nabla_x f\| \to 0$ along the outer iterates under mild assumptions.
Standard optimization theory \citep[Chapter 3]{nocedal2006} states that Algorithm \ref{alg:cons-robust-ls} converges to a minimizer $x^*$ for any fixed step-size $\alpha$, satisfying $0 < \alpha < 2/{L_f}$, which simplifies to $0<\alpha<\frac{1}{1+\gamma}$ in our case. 
 
 The subspace optimization problem presented in this section directly connects to the linear quadratic tracking objective described in Section~\ref{sec:prediction}. In line with the model predictive control (MPC) paradigm, Algorithm~\ref{alg:cons-robust-ls} is solved at each time step~$k$ in a receding-horizon fashion. From the resulting optimal trajectory $w^* = P_{\Sy^*}x^* = \col(w^*_{\textrm{ini}}, w^*_{\textrm{f}})$, the first control input of $w^*_{\textrm{f}}$ is extracted and applied to the system, and this is repeated for all $t \in [1,T]$.
\begin{algorithm2e}[t]
        \Data{Subspace Estimate $\hat{Y} \in \R^{n \times k}$, Reference $b \in \R^n$, Cost $f$.}\\
        \Input{Initial guess $x_0 \in \R^n$, step-size $\alpha > 0$, $\rho, \gamma > 0$, \texttt{tolx} $> 0$, $M \in \R^{l \times n}$.}\\
        \Output{Optimal subspace $Y^*$, Optimal point $x^*$.}
    \BlankLine\For{$i=0,1,2,\cdots,$}
    {
            $A_i = x_i x_i^\top - x_i b^\top - bx_i^\top+\gamma (x_i x_i^\top - M^\top Mb x_i^\top - x_i b^\top M^\top M)$ \;
            $Y^* =\texttt{eigs}(A_i,k)$\;
            \eIf{$d(\pi(Y^*),\pi(\hat{Y}))\leq \rho$}{ 
                $\lambda_i^* = 0$\;
            }{
            Find $\lambda_i^* > 0$ such that $ | d (\pi(\texttt{eigs} (A_i + \lambda_i^* \hat{Y}\hat{Y}^\top, k)), \pi(\hat{Y})) - \rho |=0$\;
            $B_i = A_i + \lambda_i^* \hat{Y}\hat{Y}^\top$\;
            $Y^* =\texttt{eigs} (B_i,k)$\;
            }   
        \eIf{$\lVert \nabla_{x}f(x_i, \pi(Y^*(x_i))) \rVert \leq$ \texttt{tolx}}{
        $x^* = x_i$\;
        break\;
        }{ 
        $x_{i+1} = x_i - \alpha \nabla_{x}f(x_i, \pi(Y^*(x_i)))$; 
        }
    }
\caption{Proposed Constrained Robust Least-Squares Optimization Algorithm}
\label{alg:cons-robust-ls}
\end{algorithm2e}

\section{Numerical Simulations}
\label{sec:results}

We present numerical results for the proposed algorithm applied to the finite-horizon linear quadratic tracking problem introduced in Section~\ref{sec:prediction}.  We consider both tracking and regulation tasks in two case studies. In each case, the system dynamics are described by an LTI state-space model
\begin{equation} \label{eq:lti}
x(t+1) = Ax(t) + Bu(t), \qquad 
y(t) =Cx(t) + Du(t) + e(t),
\end{equation}
where $u(t)\in\R^m$ is the input, $y(t)\in\R^p$ is the output, and $x(t)\in\R^n$ 
is the state, and $e(t)\in\R^p$ is the measurement noise, at time $t\in\N$. Output measurement noise is chosen as it directly corrupts the observed trajectories used for subspace identification, making its effect on $\rho$ transparent and easy to analyze. Process noise and innovations representations are equivalent under suitable transformations and are left as extensions. We compare the performance of the proposed algorithm with (a) a nominal (non-robust $(\rho=0)$ and noiseless) controller \eqref{eq:ls-behavioral} assuming a nominal $\hat{\Sy}$, and then with (b) a robust data-driven MPC (DD--MPC) from \citep{berberich2021}. 
A key distinction is that $\rho$ carries a direct geometric interpretation (Section \ref{sec:solution}), in contrast to the slack variable $\lambda_{\alpha} \bar{\epsilon}$ in \cite{berberich2021}, which is tuned heuristically.
The codes and utilities to reproduce the experiments are available at \url{https://github.com/shreyasnb/Robust-least-squares-for-data-driven-control}.

\subsection{Double integrator}
Consider the system \eqref{eq:lti} with $A = \begin{bmatrix}
        1 & 0.5 \\ 0 & 1
    \end{bmatrix}, B = \begin{bmatrix}
        0.125 \\ 0.5
    \end{bmatrix}, C = \begin{bmatrix}
        1 & 0
    \end{bmatrix}, D = 0$. The reference to track is $w_{\textrm{ref}}(t)=\col{(0,1)}$. Figure \ref{fig:dint1} shows the tracking in the presence of measurement noise with noise-to-signal ratio $\sigma = 0.1, 0.2$ in offline subspace identification of $\hat{\Sy}$ (using Lemmas~\ref{lemma-rank},\ref{lemma-fund}), as well as in online control. The tracking performance of the proposed method is consistent in both cases for $\gamma \geq 4$, although very large values are undesirable since they have a negative impact on the optimization performance. It is also crucial to note that similar tracking performance is observed in $\sigma=0.1$ case, for ball radius $\rho \in (0, \sin(2.5^\circ)]$, beyond which we observe significant (bounded) oscillations around the reference $r=1$, instead of asymptotic convergence as in Figure~\ref{fig:dint1}. Similarly, for $\sigma=0.2$, we have $\rho \in (0,\sin(4^\circ)]$. The radius $\rho$ is selected by measuring $d(\hat{\Sy}, \Sy_{\textrm{true}})$ on offline data for a given $\sigma$.

We observe similar tracking performance between the proposed algorithm and DD--MPC for $L = 35$, subspaces in $\Gr(37,70)$, and other design parameters (see Supplementary). Upon increasing the noise level from $\sigma=0.1$ to $0.2$, we relax our uncertainty radius from $\rho=\sin(1^\circ)$ to $\sin(2^\circ)$, while the slack variable $\lambda_{\alpha}\bar{\epsilon}$ from \citep{berberich2021} would need to be increased from $0.2$ to $0.5$ to observe similar controller performance (keeping all other parameters constant).
\begin{figure}[h]
  \begin{center}
    \begin{tikzpicture}
      \begin{axis}[
          width=0.95\linewidth,
          height=0.3\linewidth,
          xlabel={},
          xticklabels={},
          ylabel=$y(t)$ ,
          xmin = 0, xmax = 30,
          axis x line* = top,
          axis y line* = left,
          legend style={at={(0.23,1.05)}, anchor=south, legend columns=-1}
        ]
        \addplot[mark=square, mark size = 1, red, thick] table[x=t,y=ycl, col sep=comma]{data/case1_gamma4_rho1.csv};
        \addlegendentry{$y^{\textrm{PROP}}(t)$}
        \addplot[mark=square, mark size = 1, orange!90, thick] table[x=t,y=ycl, col sep=comma]{data/case1_mpc_sigma0p1.csv};
\addlegendentry{$y^{\textrm{DD--MPC}}(t)$}
        \addplot[mark=none, mark size=1, black, ultra thick] table[x=t,y=ycl_nl, col sep=comma]{data/case1_gamma4_noiseless.csv};
        \addlegendentry{$y^{\textrm{NOM}}(t)$}
        \addplot [mark=none, red, dashed, thick]
        table[x=t,y=r,col sep=comma]{data/case1_gamma4_rho1.csv};
        \draw[step=0.5cm,gray!30,thin] (0,-3) grid (30,2);
        \addplot [name path=upper,draw=none] table[x=t,y expr=\thisrow{ycl}+0.1,col sep=comma]{data/case1_gamma4_rho1.csv};
        \addplot [name path=lower,draw=none] table[x=t,y expr=\thisrow{ycl}-0.1,col sep=comma]{data/case1_gamma4_rho1.csv};
        \addplot [fill=red!20] fill between[of=upper and lower];
         \addplot [name path=upper,draw=none] table[x=t,y expr=\thisrow{ycl}+0.1,col sep=comma]{data/case1_mpc_sigma0p1.csv};
        \addplot [name path=lower,draw=none] table[x=t,y expr=\thisrow{ycl}-0.1,col sep=comma]{data/case1_mpc_sigma0p1.csv};
        \addplot [fill=orange!30] fill between[of=upper and lower];
      \end{axis}
        \begin{axis}[
          width=0.95\linewidth,
          height=0.3\linewidth,
          xlabel={},
          xticklabels={},
          ylabel=$u(t)$ ,
          xmin = 0, xmax = 30,
          axis x line* = bottom,
          axis y line* = right,
          legend style={at={(0.78,1.05)}, anchor=south, legend columns=-1}
        ]
        \addplot[mark=*, mark size = 0.5, blue, thick] table[x=t,y=ucl, col sep=comma]{data/case1_gamma4_rho1.csv};
        \addlegendentry{$u^{\textrm{PROP}}(t)$}
        \addplot[mark=*, mark size=0.5, cyan!90, thick] table[x=t,y=ucl, col sep=comma]{data/case1_mpc_sigma0p1.csv};
        \addlegendentry{$u^{\textrm{DD--MPC}}(t)$}
        \addplot[mark=none, blue!90, ultra thick] table[x=t,y=ucl_nl, col sep=comma]{data/case1_gamma4_noiseless.csv};
        \addlegendentry{$u^{\textrm{NOM}}(t)$}
      \end{axis}  
    \end{tikzpicture}
    \bigskip
    \vspace{-1.1cm}
    \begin{tikzpicture}
      \begin{axis}[
          width=0.95\linewidth,
          height=0.3\linewidth,
          xlabel={},
          ylabel=$y(t)$ ,
          xticklabels={},
          xmin = 0, xmax = 30,
          axis x line* = top,
          axis y line* = left,
          legend style={at={(0.8,0.75)}, fill=white, fill opacity=0.6, draw opacity=1,text opacity=1}
        ]
        \draw[step=0.5cm,gray!30,thin] (0,-3) grid (30,2);
        \addplot[mark=square, mark size = 1, red, thick] table[x=t,y=ycl, col sep=comma]{data/case1_gamma4_rho2.csv};
        \addplot[mark=square, mark size=1, orange!90, thick] table[x=t,y=ycl, col sep=comma]{data/case1_mpc_sigma0p2.csv};
        \addplot [mark=none, red, dashed, thick]
        table[x=t,y=r,col sep=comma]{data/case1_gamma4_rho2.csv};
        \addplot[mark=none, black, ultra thick] table[x=t,y=ycl_nl, col sep=comma]{data/case1_gamma4_noiseless.csv};
        \addplot [name path=upper,draw=none] table[x=t,y expr=\thisrow{ycl}+0.2,col sep=comma]{data/case1_gamma4_rho2.csv};
        \addplot [name path=lower,draw=none] table[x=t,y expr=\thisrow{ycl}-0.2,col sep=comma]{data/case1_gamma4_rho2.csv};
        \addplot [fill=red!20] fill between[of=upper and lower];
        \addplot [name path=upper,draw=none] table[x=t,y expr=\thisrow{ycl}+0.2,col sep=comma]{data/case1_mpc_sigma0p2.csv};
        \addplot [name path=lower,draw=none] table[x=t,y expr=\thisrow{ycl}-0.2,col sep=comma]{data/case1_mpc_sigma0p2.csv};
        \addplot [fill=orange!30] fill between[of=upper and lower];
      \end{axis}
     \begin{axis}[
          width=0.95\linewidth,
          height=0.3\linewidth,
          xlabel=$t$,
          ylabel=$u(t)$ ,
          xmin = 0, xmax = 30,
          axis x line* = bottom,
          axis y line* = right,
          legend style={at={(0.98,0.75)}, fill=white, fill opacity=0.6, draw opacity=1,text opacity=1}
        ]
        \addplot[mark=*, mark size = 0.5, blue, thick] table[x=t,y=ucl, col sep=comma]{data/case1_gamma4_rho2.csv};
        \addplot[mark=*, mark size=0.5, cyan!90, thick] table[x=t,y=ucl, col sep=comma]{data/case1_mpc_sigma0p2.csv};
        \addplot[mark=none, blue!90, ultra thick] table[x=t,y=ucl_nl, col sep=comma]{data/case1_gamma4_noiseless.csv};
      \end{axis}  
    \end{tikzpicture}
  \end{center}
  \caption{Comparison of proposed method (PROP) with DD--MPC and a nominal controller (NOM), of tracking for double-integrator system with reference $r=1$ (red dashed horizontal line). Top: trajectories for noise level $\sigma = 0.1$. Bottom: trajectories for noise level $\sigma = 0.2$.}
  \label{fig:dint1}
\end{figure}
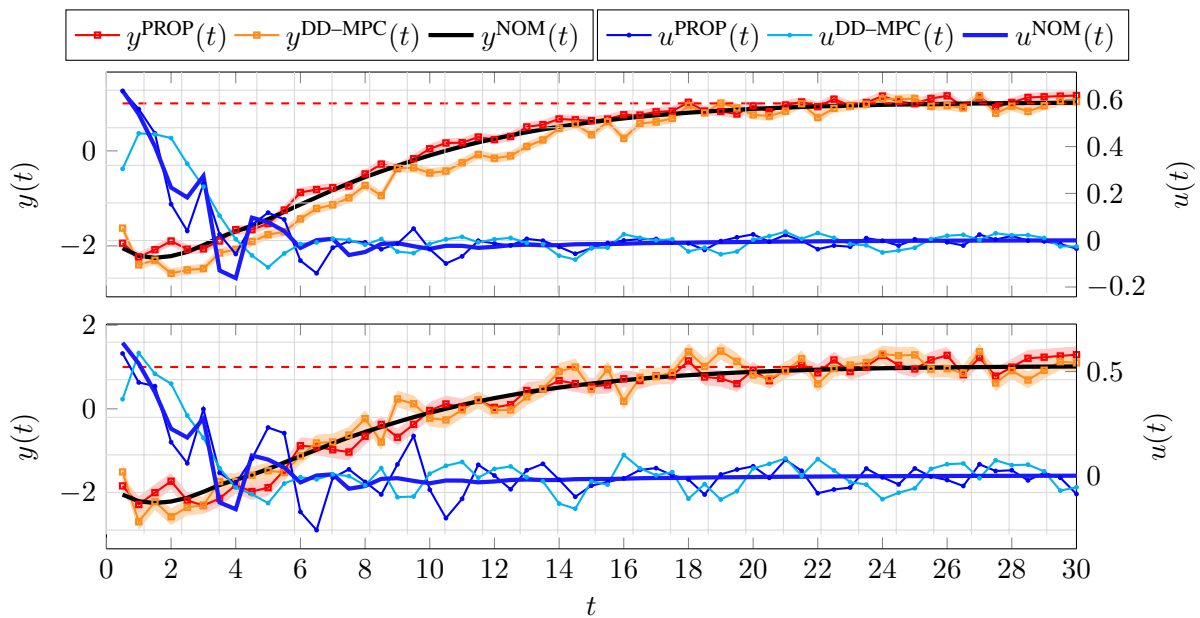
\vspace{-5pt}
\subsection{Laplacian system}\label{lapl}
We consider the system \eqref{eq:lti} proposed in \cite[Section 6]{dean2017} with
\[
    A = \begin{bmatrix}
        1.01 & 0.01 & 0 \\ 0.01 & 1.01 & 0.01 \\ 0 & 0.01 & 1.01
    \end{bmatrix}, B = I_{3}, C = \begin{bmatrix}
        1 & 0 & 0
    \end{bmatrix}, D = 0_{3 \times 1},
\]
which corresponds to a discrete-time, marginally unstable Laplacian system. The reference to track is $w_{\textrm{ref}}(t)=\col(0,0)$, which is a regulation problem. Figure \ref{fig:cons} shows regulation with varying levels ($\sigma=0.1, 0.2$) of measurement noise, comparing our proposed method with DD--MPC and the nominal regulator \eqref{eq:ls-behavioral}. It is observed that for $\gamma \geq 1.2$, the regulation performance is consistent in both noise cases. For the $\sigma=0.1$ case, similar performance is observed for ball radius $\rho \in (0, \sin(5^\circ)]$, beyond which we see significant drifting around the reference, without the asymptotic behavior.  Increasing the noise level to $\sigma=0.2$, we note that the same trend holds for $\rho \in (0, \sin(6.5^\circ)]$.

The performance is similar for $\sigma=0.1$ between the proposed algorithm, nominal controller and DD--MPC for $L=35$, subspaces in $\Gr(108, 140)$, and other parameters (see Supplementary). For $\sigma=0.2$ however, we observe slightly better regulation for the proposed algorithm compared to DD--MPC. The ball radius was updated from $\rho = \sin(2^\circ)$ to $\sin(3.5^\circ)$ due to the increase in noise level, while the slack variable $\lambda_{\alpha} \bar{\epsilon}$ from \citep{berberich2021} had to be increased from $0.1$ to $5$ for minimal drifting, to observe the regulation performance shown in Fig.~\ref{fig:cons}. 

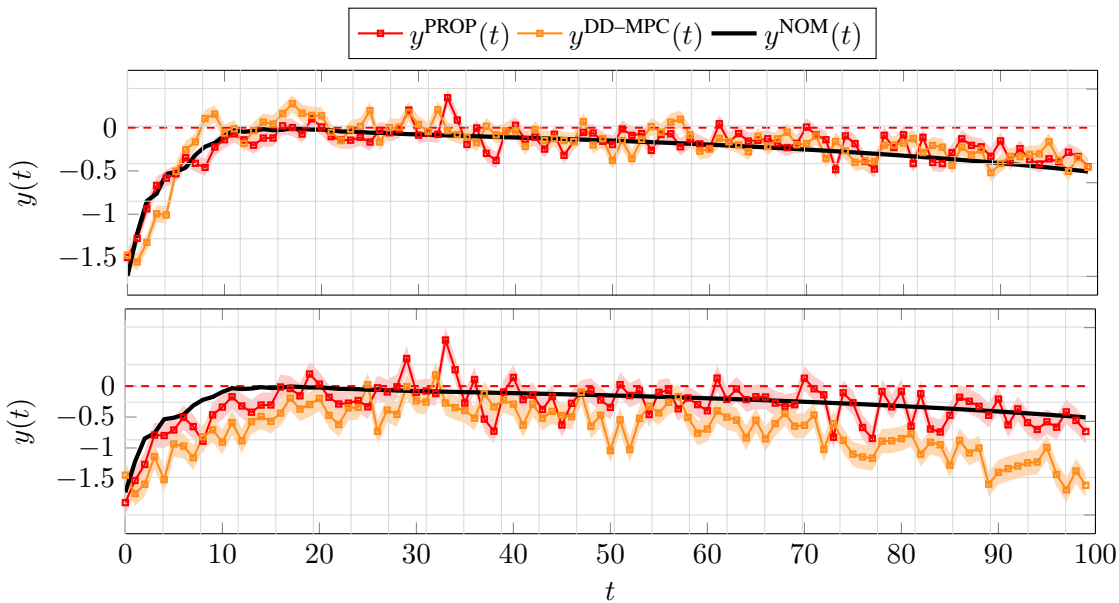
\begin{figure}[H]
  \begin{center}
    \begin{tikzpicture}
      \begin{axis}[
          width=0.95\linewidth,
          height=0.3\linewidth,
          xlabel={},
          xticklabels={},
          ylabel=$y(t)$ ,
          xmin = 0, xmax = 100,
          axis x line* = top,
          axis y line* = left,
          ytick={-1.5,-1,-0.5,0},
          yticklabels={$-1.5$,$-1$,$-0.5$,$0$},
          legend style={at={(0.5,1.05)}, anchor=south, legend columns=-1, fill=white, fill opacity=0.6, draw opacity=1,text opacity=1}
        ]
        
        \addplot[mark=square, mark size = 1, red, thick] table[x=t,y=ycl, col sep=comma]{data/case2_gamma4_rho2.csv};
        \addlegendentry{$y^{\textrm{PROP}}(t)$}
        \addplot[mark=square, mark size = 1, orange!90, thick] table[x=t,y=ycl, col sep=comma]{data/case2_mpc_sigma0p1.csv};
\addlegendentry{$y^{\textrm{DD--MPC}}(t)$}
         \addplot[mark=none, mark size=1, black, ultra thick] table[x=t,y=ycl_nl, col sep=comma]{data/case2_gamma4_rho2.csv};
    \addlegendentry{$y^{\textrm{NOM}}(t)$}
     \addplot [mark=none, red, dashed, thick]
        table[x=t,y expr=0,col sep=comma]{data/case2_gamma4_rho2.csv};
        \draw[step=0.5cm,gray!30,thin] (0,-3) grid (100,2);
        \addplot [name path=upper,draw=none] table[x=t,y expr=\thisrow{ycl}+0.1,col sep=comma]{data/case2_gamma4_rho2.csv};
        \addplot [name path=lower,draw=none] table[x=t,y expr=\thisrow{ycl}-0.1,col sep=comma]{data/case2_gamma4_rho2.csv};
        \addplot [fill=red!20] fill between[of=upper and lower];
        \addplot [name path=upper,draw=none] table[x=t,y expr=\thisrow{ycl}+0.1,col sep=comma]{data/case2_mpc_sigma0p1.csv};
        \addplot [name path=lower,draw=none] table[x=t,y expr=\thisrow{ycl}-0.1,col sep=comma]{data/case2_mpc_sigma0p1.csv};
        \addplot [fill=orange!30] fill between[of=upper and lower];
      \end{axis}
        \begin{axis}[
          width=0.95\linewidth,
          height=0.3\linewidth,
          xlabel={},
          xticklabels={},
          ylabel={},
          yticklabels={},
          xmin = 0, xmax = 100,
          axis x line* = bottom,
          axis y line* = right,
          legend style={at={(0.75,1.05)}, anchor=south, legend columns=-1, fill=white, fill opacity=0.6, draw opacity=1,text opacity=1}
        ]
      \end{axis}  
    \end{tikzpicture}
    \bigskip
    \vspace{-0.8cm}
    \begin{tikzpicture}
      \begin{axis}[
          width=0.95\linewidth,
          height=0.3\linewidth,
          xlabel=$t$,
          ylabel=$y(t)$ ,
          ytick={-1.5,-1,-0.5,0},
          yticklabels={$-1.5$,$-1$,$-0.5$,$0$},
          xtick={0,10,20,30,40,50,60,70,80,90,100},
          xticklabels={$0$,$10$,$20$,$30$,$40$,$50$,$60$,$70$,$80$,$90$,$100$},
          xmin = 0, xmax = 100,
          axis x line* = bottom,
          axis y line* = left,
          legend style={at={(0.85,0.75)}}
        ]
        \draw[step=0.5cm,gray!30,thin] (0,-3) grid (100,2);
        \addplot[mark=square, mark size = 1, red, thick] table[x=t,y=ycl, col sep=comma]{data/case2_gamma4_rho3p5.csv};
        \addplot [mark=none,red, dashed, thick]
        table[x=t,y expr=0,col sep=comma]{data/case2_gamma4_rho3p5.csv}; 
          \addplot[mark=square, mark size = 1, orange!90, thick] table[x=t,y=ycl, col sep=comma]{data/case2_mpc_sigma0p2.csv};
        \addplot [name path=upper,draw=none] table[x=t,y expr=\thisrow{ycl}+0.2,col sep=comma]{data/case2_gamma4_rho3p5.csv};
        \addplot [name path=lower,draw=none] table[x=t,y expr=\thisrow{ycl}-0.2,col sep=comma]{data/case2_gamma4_rho3p5.csv};
        \addplot [fill=red!20] fill between[of=upper and lower];
        \addplot [name path=upper,draw=none] table[x=t,y expr=\thisrow{ycl}+0.2,col sep=comma]{data/case2_mpc_sigma0p2.csv};
        \addplot [name path=lower,draw=none] table[x=t,y expr=\thisrow{ycl}-0.2,col sep=comma]{data/case2_mpc_sigma0p2.csv};
        \addplot [fill=orange!30] fill between[of=upper and lower];
        \addplot[mark=none, mark size=1, black, ultra thick] table[x=t,y=ycl_nl, col sep=comma]{data/case2_gamma4_rho2.csv};
      \end{axis}
     \begin{axis}[
          width=0.95\linewidth,
          height=0.3\linewidth,
          xlabel={},
          ylabel={},
          yticklabels={},
          xticklabels={},
          xmin = 0, xmax = 100,
          axis x line* = top,
          axis y line* = right,
          legend style={at={(0.98,0.75)}}
        ]
      \end{axis}  
    \end{tikzpicture}
  \end{center}
    \caption{Performance comparison of proposed method (PROP) with DD--MPC, for linear quadratic tracking of the Laplacian system in Section \ref{lapl} with reference $r=0$. Top: tracking for noise level $\sigma = 0.1$. Bottom: tracking for noise level $\sigma = 0.2$.}
  \label{fig:cons}
\end{figure}

\section{Conclusion}
\label{sec:conclusions} 
This work extended a geometrically grounded framework for least-squares to robust data-driven predictive control. By modeling behavioral subspace uncertainty as a metric ball on the Grassmannian manifold, we obtained a tractable min–max formulation. The core contribution is a closed-form solution to the inner maximization problem, leading to an efficient convex optimization algorithm. The resulting method enhances robustness in data-enabled predictive control, offering clear geometric interpretation and favorable computational scaling. The framework’s generality and efficiency make it promising for extensions to broader classes of data-driven estimation and control problems. An open question is whether the Grassmannian uncertainty ball admits a linear fractional representation in the sense of robust control theory, which could connect the behavioral uncertainty model to classical robustness analysis frameworks. Establishing formal closed-loop stability and robustness guarantees under bounded subspace uncertainty remains an important direction for future work and represents a natural extension of the geometric framework developed here.
\bibliography{ref}

@book{Gauss1809,
  author    = {Carl Friedrich Gauss},
  title     = {Theoria motus corporum coelestium in sectionibus conicis solem ambientum},
  year      = {1809},
  address   = {Hamburg},
  publisher = {F. Perthes and I.H. Besser},
  url       = {https://archive.org/details/bub\_gb\_ORUOAAAAQAAJ},
  urldate   = {2025-09-07},
}

@article{ghaoui97,
author = {El Ghaoui, Laurent and Lebret, Herv\'{e}},
title = {Robust Solutions to Least-Squares Problems with Uncertain Data},
journal = {SIAM Journal on Matrix Analysis and Applications},
volume = {18},
number = {4},
pages = {1035-1064},
year = {1997},
doi = {10.1137/S0895479896298130},
URL = { 
        https://doi.org/10.1137/S0895479896298130
},
eprint = { 
        https://doi.org/10.1137/S0895479896298130
}
}

@article{MARKOVSKY20072283,
title = {Overview of total least-squares methods},
journal = {Signal Processing},
volume = {87},
number = {10},
pages = {2283-2302},
year = {2007},
note = {Special Section: Total Least Squares and Errors-in-Variables Modeling},
issn = {0165-1684},
doi = {https://doi.org/10.1016/j.sigpro.2007.04.004},
author = {Ivan Markovsky and Sabine {Van Huffel}},
keywords = {Total least squares, Orthogonal regression, Errors-in-variables model, Deconvolution, Linear prediction, System identification}
}

@incollection{delmas,
  TITLE = {{Subspace tracking for signal processing}},
  AUTHOR = {Delmas, Jean-Pierre},
  URL = {https://hal.science/hal-01320647},
  BOOKTITLE = {{Adaptive signal processing : next generation solutions}},
  HAL_LOCAL_REFERENCE = {10285},
  PUBLISHER = {{Wiley-IEEE Press}},
  PAGES = {211 - 270},
  YEAR = {2010},
  PDF = {https://hal.science/hal-01320647v2/file/Subspace-tracking-for-signal-processing.pdf},
  HAL_ID = {hal-01320647},
  HAL_VERSION = {v2},
}

@article{He2012,
  title={Incremental gradient on the Grassmannian for online foreground and background separation in subsampled video},
  author={Jun He and Laura Balzano and Arthur Szlam},
  journal={2012 IEEE Conference on Computer Vision and Pattern Recognition},
  year={2012},
  pages={1568-1575},
  url={https://api.semanticscholar.org/CorpusID:17060463}
}

@article{willems2005,
title = "A note on persistency of excitation",
keywords = "bahavioral systems, persistency of excitation, lags, annihilators, system identification, TIME-INVARIANT SYSTEMS, LINEAR-SYSTEM, IDENTIFICATION",
author = "JC Willems and P Rapisarda and Markovsky and De Moor, BLM",
year = "2005",
month = apr,
doi = "10.1016/j.sysconle.2004.09.003",
language = "English",
volume = "54",
pages = "325--329",
journal = "Systems \& Control Letters",
issn = "0167-6911",
publisher = "Elsevier B.V.",
number = "4",

}

@INPROCEEDINGS{jeremy2019,

  author={Coulson, Jeremy and Lygeros, John and Dörfler, Florian},

  booktitle={2019 18th European Control Conference (ECC)}, 

  title={Data-Enabled Predictive Control: In the Shallows of the DeePC}, 

  year={2019},

  volume={},

  number={},

  pages={307-312},

  keywords={Linear systems;Trajectory tracking;Heuristic algorithms;Stochastic systems;Stochastic processes;Prediction algorithms;Robustness;Trajectory;Nonlinear dynamical systems;Predictive control},

  doi={10.23919/ECC.2019.8795639}}

@inproceedings{padoan2022,
title = "Behavioral uncertainty quantification for data-driven control",
author = "Alberto Padoan and Jeremy Coulson and Van Waarde, Henk J. and John Lygeros and Florian Dorfler",
year = "2022",
doi = "10.1109/CDC51059.2022.9993002",
language = "English",
isbn = "978-1-6654-6762-9",
series = "Proceedings of the IEEE Conference on Decision and Control",
publisher = "IEEE",
pages = "4726--4731",
booktitle = "2022 IEEE 61st Conference on Decision and Control, CDC 2022",

}

@article{willems1986,
title = {From time series to linear system—Part I. Finite dimensional linear time invariant systems},
journal = {Automatica},
volume = {22},
number = {5},
pages = {561-580},
year = {1986},
issn = {0005-1098},
doi = {https://doi.org/10.1016/0005-1098(86)90066-X},
author = {Jan C. Willems},
}

@article{markovsky2021,
  author = {I. Markovsky and F. Dörfler},
  title = {Behavioral systems theory in data-driven analysis, signal processing, and control},
  journal = {Annual Reviews in Control},
  volume = {52},
  pages = {42--64},
  year = {2021},
  keywords = {Behavioral systems theory, data-driven control, missing data estimation, system identification},
  doi = {10.1016/j.arcontrol.2021.09.005}
}

@article{markovsky2008,
  author = {I. Markovsky and P. Rapisarda},
  title = {Data-driven simulation and control},
  journal = {Int. J. Contr.},
  volume = {81},
  number = {12},
  pages = {1946--1959},
  year = {2008},
  keywords = {simulation, data-driven control, output matching, linear quadratic tracking, system identification.},
  doi = {10.1080/00207170801942170},
}

@book{anderson2007,
author = {Anderson, Brian D. O. and Moore, John B.},
title = {Optimal control: linear quadratic methods},
year = {1990},
isbn = {0136385605},
publisher = {Prentice-Hall, Inc.},
address = {USA}
}

@book{absil,
author = {Absil, P.-A. and Mahony, R. and Sepulchre, R.},
title = {Optimization Algorithms on Matrix Manifolds},
year = {2007},
isbn = {0691132984},
publisher = {Princeton University Press},
address = {USA},
}

@misc{sasfi2024,
      title={Subspace tracking for online system identification}, 
      author={András Sasfi and Alberto Padoan and Ivan Markovsky and Florian Dörfler},
      year={2024},
      eprint={2412.09052},
      archivePrefix={arXiv},
      primaryClass={eess.SY},
      url={https://arxiv.org/abs/2412.09052}, 
}

@Book{boumal2023,
  title     = {An introduction to optimization on smooth manifolds},
  author    = {Boumal, Nicolas},
  publisher = {Cambridge University Press},
  year      = {2023},
  url       = {https://www.nicolasboumal.net/book},
  doi       = {10.1017/9781009166164}
}

@article{golub1999,
author = {Golub, Gene H. and Hansen, Per Christian and O'Leary, Dianne P.},
title = {Tikhonov Regularization and Total Least Squares},
journal = {SIAM Journal on Matrix Analysis and Applications},
volume = {21},
number = {1},
pages = {185-194},
year = {1999},
doi = {10.1137/S0895479897326432},
URL = { 
        https://doi.org/10.1137/S0895479897326432
}
}

@article{dean2017,
author = {Dean, Sarah and Mania, Horia and Matni, Nikolai and Recht, Benjamin and Tu, Stephen},
year = {2017},
month = {10},
pages = {},
title = {On the Sample Complexity of the Linear Quadratic Regulator},
volume = {20},
journal = {Foundations of Computational Mathematics},
doi = {10.1007/s10208-019-09426-y}
}

@article{padoan2025distances,
  author    = {A. Padoan and J. Coulson},
  title     = {Distances Between Finite-Horizon Linear Behaviors},
  journal   = {IEEE Control Systems Letters},
  year      = {2025},
  volume    = {9},
  number    = {},
  pages     = {1772--1777},
  doi       = {10.1109/LCSYS.2025.3576312}
}

@ARTICLE{scokaert1998,

  author={Scokaert, P.O.M. and Mayne, D.Q.},

  journal={IEEE Transactions on Automatic Control}, 

  title={Min-max feedback model predictive control for constrained linear systems}, 

  year={1998},

  volume={43},

  number={8},

  pages={1136-1142},

  keywords={Feedback;Predictive models;Predictive control;Linear systems;Automatic control;Riccati equations;Control systems;Differential equations;Stochastic processes;Stochastic systems},

  doi={10.1109/9.704989}}

@misc{xie2024,
      title={Data-Driven Min-Max MPC for LPV Systems with Unknown Scheduling Signal}, 
      author={Yifan Xie and Julian Berberich and Felix Brändle and Frank Allgöwer},
      year={2024},
      eprint={2411.05624},
      archivePrefix={arXiv},
      primaryClass={eess.SY},
      url={https://arxiv.org/abs/2411.05624}, 
}

@misc{xie2025,
      title={Bilinear Data-Driven Min-Max MPC: Designing Rational Controllers via Sum-of-squares Optimization}, 
      author={Yifan Xie and Julian Berberich and Robin Strässer and Frank Allgöwer},
      year={2025},
      eprint={2504.04870},
      archivePrefix={arXiv},
      primaryClass={eess.SY},
      url={https://arxiv.org/abs/2504.04870}, 
}

@misc{huang2021,
      title={Robust Data-Enabled Predictive Control: Tractable Formulations and Performance Guarantees}, 
      author={Linbin Huang and Jianzhe Zhen and John Lygeros and Florian Dörfler},
      year={2021},
      eprint={2105.07199},
      archivePrefix={arXiv},
      primaryClass={eess.SY},
      url={https://arxiv.org/abs/2105.07199}, 
}

@article{xie2026,
title = {Data-driven min–max MPC for linear systems: Robustness and adaptation},
journal = {Automatica},
volume = {183},
pages = {112612},
year = {2026},
issn = {0005-1098},
doi = {https://doi.org/10.1016/j.automatica.2025.112612},
url = {https://www.sciencedirect.com/science/article/pii/S0005109825005072},
author = {Yifan Xie and Julian Berberich and Frank Allgöwer},
keywords = {Data-based control, Optimal controller synthesis for systems with uncertainties, Model predictive control, Adaptive control},
}

@ARTICLE{dorfler2023,

  author={Dörfler, Florian and Coulson, Jeremy and Markovsky, Ivan},

  journal={IEEE Transactions on Automatic Control}, 

  title={Bridging Direct and Indirect Data-Driven Control Formulations via Regularizations and Relaxations}, 

  year={2023},

  volume={68},

  number={2},

  pages={883-897},

  keywords={Trajectory;Linear systems;Predictive control;Data models;Aerospace electronics;Optimization;Complexity theory;Optimal control;Pareto optimization;system identification},

  doi={10.1109/TAC.2022.3148374}}

@misc{liu2019,
      title={Simple algorithms for optimization on Riemannian manifolds with constraints}, 
      author={Changshuo Liu and Nicolas Boumal},
      year={2019},
      eprint={1901.10000},
      archivePrefix={arXiv},
      primaryClass={math.OC},
      url={https://arxiv.org/abs/1901.10000}, 
}

@book{nocedal2006,
    author    = {Jorge Nocedal and Stephen J. Wright},
    title     = {Numerical Optimization},
    publisher = {Springer},
    year      = {2006},
    series    = {Springer Series in Operations Research and Financial Engineering},
    address   = {New York, NY},
    edition   = {Second}
}

@article{golub1980analysis,
  author    = {Golub, Gene H. and Van Loan, Charles F.},
  title     = {An Analysis of the Total Least Squares Problem},
  journal   = {SIAM Journal on Numerical Analysis},
  volume    = {17},
  number    = {6},
  pages     = {883--893},
  year      = {1980},
  publisher = {SIAM},
  doi       = {10.1137/0717073}
}

@book{golub2013matrix,
author = {Golub, Gene H. and Van Loan, Charles F.},
title = {Matrix Computations - 4th Edition},
publisher = {Johns Hopkins University Press},
year = {2013},
doi = {10.1137/1.9781421407944},
address = {Philadelphia, PA},
edition   = {},
URL = {https://epubs.siam.org/doi/abs/10.1137/1.9781421407944},
eprint = {https://epubs.siam.org/doi/pdf/10.1137/1.9781421407944}
}

@article{berberich2022linear,
  title={Linear tracking MPC for nonlinear systems—Part {II}: The data-driven case},
  author={Berberich, J. and K{\"o}hler, J. and M{\"u}ller, M. A. and Allg{\"o}wer, F.},
  journal={IEEE Transactions on Automatic Control},
  volume={67},
  number={9},
  pages={4406--4421},
  year={2022},
  publisher={IEEE}
}

@article{breschi2023datadriven,
  author  = {V. Breschi and A. Chiuso and S. Formentin},
  title   = {Data-driven predictive control in a stochastic setting: a unified framework},
  journal = {Automatica},
  volume  = {152},
  pages   = {110961},
  year    = {2023}
}

@article{de2019formulas,
  title={Formulas for data-driven control: Stabilization, optimality, and robustness},
  author={De Persis, C.  and Tesi, P.},
  journal={\TAC},
  volume={65},
  number={3},
  pages={909--924},
  year={2019},
  publisher={IEEE}
}

@ARTICLE{berberich2021,

  author={Berberich, Julian and Köhler, Johannes and Müller, Matthias A. and Allgöwer, Frank},

  journal={IEEE Transactions on Automatic Control}, 

  title={Data-Driven Model Predictive Control With Stability and Robustness Guarantees}, 

  year={2021},

  volume={66},

  number={4},

  pages={1702-1717},

  keywords={Trajectory;Linear systems;Stability analysis;Noise measurement;Control theory;Data-driven control;predictive control for linear systems;robust control;uncertain systems},

  doi={10.1109/TAC.2020.3000182}}

@article{mayne2005robust,
title = {Robust model predictive control of constrained linear systems with bounded disturbances},
journal = {Automatica},
volume = {41},
number = {2},
pages = {219-224},
year = {2005},
issn = {0005-1098},
doi = {https://doi.org/10.1016/j.automatica.2004.08.019},
url = {https://www.sciencedirect.com/science/article/pii/S0005109804002870},
author = {D.Q. Mayne and M.M. Seron and S.V. Raković},
keywords = {Robust model predictive control, Robustness, Bounded disturbances}
}

@book{boyd2004, 
place={Cambridge}, 
title={Convex Optimization}, publisher={Cambridge University Press}, author={Boyd, Stephen and Vandenberghe, Lieven}, 
year={2004}
}

@book{bertsekas_nlp,
  title={Nonlinear Programming},
  author={Bertsekas, D.},
  isbn={9781886529052},
  series={Athena scientific optimization and computation series},
  year={2016},
  publisher={Athena Scientific}
}

@inproceedings{coulson2025,
  author    = {Coulson, J. and Padoan, A. and Mostajeran, C.},
  title     = {Geometrically robust least squares through manifold optimization},
  booktitle = {Proc.\ 26th Int.\ Symp.\ Math.\ Theory of Networks and Systems (MTNS)},
  address   = {Cambridge, UK},
  year      = {2025}
}

@article{markovsky2023id,
  author = {I. Markovsky and F. Dörfler},
  title = {Identifiability in the behavioral setting},
  journal = {IEEE Trans. Automat. Contr.},
  volume = {68},
  issue = {3},
  pages = {1667--1677},
  year = {2023},
  doi = {10.1109/TAC.2022.3209954}
}

@book{bhatia1997matrix,
  title     = {Matrix Analysis},
  author    = {Bhatia, Rajendra},
  series    = {Graduate Texts in Mathematics},
  volume    = {169},
  year      = {1997},
  publisher = {Springer},
  address   = {New York},
  doi       = {10.1007/978-1-4612-0653-8}
}

\newpage

\appendix

\begin{center}
    \huge{Supplementary Sections}
    \[
    \
    \
    \
    \]
\end{center}

\section{Detailed Preliminaries}
\subsection*{Grassmann Manifold Geometry}

  The \textit{Grassmannian} $\Gr(k,n)$ is the Riemannian manifold of all $k$-dimensional linear subspaces of $\R^n$~\citep{boumal2023}.
A standard way to describe it is
through the \textit{Stiefel manifold}
\begin{equation} \label{eq:stiefel_manifold}
    \St(k,n)
    := \bigl\{\, Y \in \R^{n \times k} : Y^\top Y = I_k \,\bigr\},
\end{equation}
whose elements are matrices with orthonormal columns. The orthonormality
constraints $Y^\top Y = I_k$  define a smooth surface in $\R^{n \times k}$, so $\St(k,n)$ is a smooth
manifold of dimension $nk - \frac{k(k+1)}{2}$~\citep{boumal2023}. Different matrices 
in $\St(k,n)$ may span the same subspace, and identifying all orthonormal 
bases that generate the same subspace yields a smooth manifold structure.  
Equivalently, the Grassmannian can be viewed as the space of equivalence 
classes of $\St(k,n)$ under right multiplication by elements of $O(k)$, the group of all 
$k \times k$ orthogonal matrices. 

An alternative and particularly convenient representation uses orthogonal 
projection matrices. Every $k$-dimensional subspace $\mathcal{Y}$ of $\R^n$ admits a
unique orthogonal projector $P_{\mathcal{Y}}$, which makes this identification natural. 
In this representation, the Grassmannian can be written as
\[
    \Gr(k,n)
    =
    \bigl\{
        P \in \R^{n \times n}
        :\;
        P^\top = P,\;
        P^2 = P,\;
        \mathrm{rank}(P) = k
    \bigr\},
\]
so that each point corresponds to a unique symmetric, idempotent matrix of
rank~$k$, and  the associated $k$-dimensional subspace is given by the image of~$P$.

The link between these two representations is given by the canonical projection
\[
\pi:\St(k,n)\to\Gr(k,n),\qquad
\pi(Y)=YY^\top,
\]
which maps an orthonormal basis $Y$ to the orthogonal projector onto its span.
The map $\pi$ identifies all bases spanning the same subspace, since
$YQ$ and $Y$ yield the same projector for any $Q\in O(k)$. Equivalently, $\pi$ is constant along right–$O(k)$ orbits, that is, sets of the form $\{YQ:Q\in O(k)\}$, and thus becomes
injective on the quotient $\St(k,n)/O(k)$.

The Grassmannian admits a natural smooth Riemannian structure, inherited from
the canonical quotient metric on $\St(k,n)/O(k)$.  For any
$\mathcal{Y} \in \Gr(k,n)$ represented by $Y \in \St(k,n)$, the tangent space is
\[
T_{\mathcal{Y}}\Gr(k,n)
= \bigl\{\, V \in \R^{n\times k} \;\big|\; Y^\top V = 0 \,\bigr\}.
\]
where tangent directions are matrices orthogonal (column-wise) to $Y$,
reflecting that motion on the manifold preserves the orthonormality
constraint~\citep{boumal2023}.

Given a smooth function $f : \Gr(k,n) \to \R$, its Riemannian gradient at
$\mathcal{Y}$ is obtained by projecting the Euclidean gradient of any smooth
extension $\bar f : \R^{n\times k} \to \R$ onto the tangent space:
\[
\grad f \, (\mathcal{Y})
= P_{\mathcal{Y}}^\perp \,\nabla \bar f(Y),
\qquad
P_{\mathcal{Y}}^\perp := I_n - YY^\top,
\]
where $\bar f$ satisfies $\bar f(Y) = f(\mathcal{Y})$ for any orthonormal basis
$Y$ of $\mathcal{Y}$, and $\nabla \bar f(Y)$ denotes the standard Euclidean gradient of $\bar f$.

Several distances can be used to compare subspaces~\citep{boumal2023}, such as geodesic, projection, and Procrustes distances. The \emph{chordal distance} is particularly convenient because it admits a simple expression in terms of projection matrices and is computationally efficient. It is defined as
\[
d_2(\mathcal{Y}_1,\mathcal{Y}_2) := \sqrt{\mathrm{Tr}\!\left(P_{\mathcal{Y}_1}^\perp P_{\mathcal{Y}_2}\right)},
\]
and measures the discrepancy between the projectors. The chordal distance is invariant under changes of orthonormal bases and depends only on the principal angles between the subspaces, capturing how much one subspace deviates from the other.

The chordal metric is closely related to the \emph{gap} metric~\citep{boumal2023}, defined as
\[
d_\infty(\Y_1,\Y_2)
:=  \|P_{\Y_1} - P_{\Y_2}\|_2,
\]
which measures the largest principal angle between the subspaces. The two
metrics are equivalent (see \cite{padoan2022} and references therein) on $\Gr(k,n)$, and satisfy
\[
d_\infty(\Y_1,\Y_2)
\;\le\;
d_2(\Y_1,\Y_2)
\;\le\;
\sqrt{k}\, d_\infty(\Y_1,\Y_2),
\]
so the gap distance controls the worst–case angular deviation, whereas the
chordal distance aggregates the contribution of all principal angles. When average or aggregate separation is more informative than the largest principal
angle, the chordal metric provides a natural and tractable choice.

\section{Proof of Lemma \ref{lemma:1}}
\begin{proof} 
Fix $\Sy \in \mathbb{B}^d_{\rho}(\hat{\Sy})$. Then $P_{\Sy}$ is a constant symmetric idempotent matrix ($P_{\Sy}=P_{\Sy}^\top=P_{\Sy}^2$). Defining
$r(x):=P_{\Sy}x - b$, we have
\begin{equation*}
\begin{split}
f(x,\Sy)&=\|r(x)\|_2^2+\gamma\|M r(x)\|_2^2 \\
&= x^\top P_{\Sy}x - 2 b^\top P_{\Sy}x + \|b\|_2^2
  + \gamma\, x^\top P_{\Sy}M^\top M P_{\Sy}x - 2\gamma\, b^\top M^\top M P_{\Sy}x + \gamma\|Mb\|_2^2.
\end{split}
\end{equation*}
Since $P_{\Sy}\succeq 0$ and $P_{\Sy}M^\top M P_{\Sy}\succeq 0$, $f(\cdot,\Sy)$ is a convex quadratic in $x$.
Therefore $g(x)=\sup_{\Sy\in \mathbb{B}^d_{\rho}(\hat{\Sy})} f(x,\Sy)$ is the pointwise supremum of convex functions and is convex \cite[Sec.~3.2.3]{boyd2004}.

For the gradient, $f$ is continuously differentiable in $x$ for each $\Sy$, and 
$\mathbb{B}^d_{\rho}(\hat{\Sy})$ is compact in $\Gr(k,n)$; therefore, Danskin’s theorem applies
(e.g. \cite[Prop.~B.22]{bertsekas_nlp}). If the maximizer is unique,
\[
\nabla g(x) \;=\; \nabla_x f\bigl(x,\Sy^*(x)\bigr).
\]
Differentiating \eqref{eq:reg-cost} in $x$ (with $P_{\Sy}$ treated as constant) and using
$P_{\Sy}=P_{\Sy}^\top$ yields
\[
\nabla_x f(x,\Sy) \;=\; 2P_{\Sy}(P_{\Sy}x - b)
\;+\; 2\gamma\, P_{\Sy}M^\top M (P_{\Sy}x - b),
\]
which gives \eqref{eq:gradfx} after substituting $\Sy=\Sy^*(x)$. If the maximizer is not unique, Danskin's theorem further yields the stated subgradient characterization.
\end{proof}

\section{Proof of Theorem \ref{theorem:1} }
\begin{proof}
    The proof proceeds by reformulating the inner maximization into a sequence of equivalent problems, the last of which admits an explicit analytic solution. First, the inner problem in~\eqref{eq:reg-robust-lsq} can be simplified using the identity $\langle v_1, v_2 \rangle = \Tr(v_1 v_2^\top)$ for all $ v_1,v_2 \in \R^n$, where $\langle \cdot, \cdot \rangle$ is the standard inner product in Euclidean space. Using $P = P^2, MM^\top = I_l$ and ignoring constants which do not affect the optimizer, the inner problem in \eqref{eq:reg-robust-lsq} can be equivalently expressed as 
    \[
        \max_{\Sy \in \B_{\rho}^{d}(\hat{\Sy})} \langle P_{\Sy} x, (x-b) \rangle - \langle P_{\Sy} b, x \rangle + \gamma \langle P_{\Sy} M^\top MP_{\Sy} x, (x-b) \rangle - \gamma \langle P_{\Sy} M^\top Mb, x \rangle, \quad \forall x \in \R^n.
    \]
    Defining $A(x;\gamma) := xx^\top - xb^\top - bx^\top + \gamma(xx^\top - M^\top M bx^\top - xb^\top M^\top M)$, we obtain
   
    \begin{equation}\label{eq:const-robust-lsq1}
        \max_{\Sy \in \B^{d}_{\rho}(\hat{\Sy})} \Tr\left( P_{\Sy} A(x;\gamma) \right), \quad \forall x \in \R^n.
    \end{equation}
    We define the inequality constraint due to the ball $g(\Sy) := d(\Sy, {\hat{\Sy}})^2 - \rho^2 \leq 0.$
    Using the definition of $d$ in \eqref{eq:chordal}, we have $g(\Sy) = k - \Tr\left(P_{\Sy}P_{\hat{\Sy}} \right) - \rho^2$. The constrained problem \eqref{eq:const-robust-lsq1} can be solved by using an exact penalty function \citep{liu2019} :
    %
%
    \[  
        \max_{\Sy \in \Gr(k,n)} \Tr\left(P_{\Sy} A(x;\gamma)  \right) - \lambda g(\Sy) \quad \forall x \in \R^n
    \]
    for a sufficiently large constraint multiplier $\lambda \in \R_{\geq 0}$, such that the local maximizer $\Sy^*$ to the above problem is also the local maximizer of \eqref{eq:const-robust-lsq1}. Replacing $P_{\Sy}=YY^\top, P_{\hat{\Sy}} = \hat{Y}\hat{Y}^\top$ produces
    \[
    \max_{Y \in \St(k,n)} \Tr\left(Y^\top \left[A(x;\gamma) + \lambda\hat{Y} \hat{Y}^\top \right] Y \right).
    \]
    Defining $B(x,\lambda; \gamma) := A(x;\gamma) + \lambda \hat{Y} \hat{Y}^\top$, we obtain 
    \begin{equation}\label{eq:equiv-reg-rob1}
        \max_{Y \in \St(k,n)} \Tr\left(Y^\top B(x,\lambda;\gamma)Y \right).
    \end{equation}
    Alternatively, considering the inner problem in \eqref{eq:reg-robust-lsq}, we define the Lagrangian function $\mathcal{L}(x,\Sy, \lambda) := f(x,\Sy) - \lambda g(\Sy)$, for $\lambda \in \R_{\geq 0}$, such that we equivalently have: 
    \[
        \max_{\Sy \in \Gr(k,n)} \min_{\lambda} \mathcal{L}(x, \Sy, \lambda) = \max_{\Sy \in \Gr(k,n)} \min_{\lambda}  f(x,\Sy) - \lambda g(\Sy), \quad \text{for some } x \in \R^n.
    \]
    From first-order optimality conditions \citep[Def.~2.3]{liu2019}, we have
    \[
        \grad_{\Sy} \mathcal{L}(x, \Sy^*, \lambda^*) = 0  \implies \grad_{\Sy} f(x, \Sy^*) = \lambda^* \grad_{\Sy} g(\Sy^*),
    \]
    where $\Sy^*$ and $ \lambda^*$ are optimal. Taking the Riemannian gradient (see Section~\ref{sec:prelims}), we obtain
    \begin{equation}\label{eq:bmatrix1}
    \begin{array}{lll}
        \left(I-Y^*{Y^*}^\top \right)A(x;\gamma)Y^* = \lambda^*\left(I-Y^*{Y^*}^\top\right)\hat{Y}\hat{Y}^\top Y^* \\
        \implies B(x, \lambda^*;\gamma) Y^* = Y^*\left[{Y^*}^\top B(x, \lambda^*;\gamma)Y^*\right],
    \end{array}
    \end{equation}
     where $B(x, \lambda^*;\gamma) = A(x;\gamma) + \lambda^* \hat{Y}\hat{Y}^\top$. The stationary condition in \eqref{eq:bmatrix1} is the well-known eigenvalue problem \cite[Section 2.1]{absil} with the explicit solution
    \begin{equation*}
        Y^*(x) = \{\textrm{top--$k$ eigenvectors of }B(x, \lambda^*; \gamma) \}. 
    \end{equation*}
    This claim is also supported by the problem structure observed in~\eqref{eq:equiv-reg-rob1}. The corresponding optimal subspace $\Sy^*(x)$ is such that $P_{\Sy^*(x)} = \pi(Y^*(x))$.
\end{proof}

\fin{
\begin{remark}[Effective computation of $Y^*(x)$]
    The top-$k$ eigenspace of the matrix $B(x,\lambda;\gamma) = A(x;\gamma) + \lambda P_{\Syhat}$ can be computed effectively using a low-rank eigendecomposition technique since $\rank(B) \leq k+2$. This can bring down the computational cost from $\mathcal{O}(n^3)$ to $\mathcal{O}(n(k+2)^2)$.
\end{remark}

\begin{remark}[Uniqueness of $\Sy^*(x)$]
    There exists a threshold $\lambda^{**}$ such that for all $\lambda^* \geq \lambda^{**}$, the $k$-th and $(k+1)$-th eigenvalues of $B(x,\lambda^*;\gamma)$ are separated, ensuring that the top-$k$ eigenspace is unique. This follows from matrix perturbation theory \citep[Chapter VII]{bhatia1997matrix}: viewing $\lambda P_{\Syhat}$ as a nominal matrix perturbed by $A(x;\gamma)$, sufficiently large $\lambda$ dominates the perturbation and the top-$k$ eigenspace is a small perturbation of $\Syhat$, which is $k$-dimensional with a spectral gap. Hence, for the $\lambda^*$ found by Algorithm \ref{alg:cons-robust-ls} (via bisection), the solution $\Sy^*(x)$ is unique.
\end{remark}
}

\section{Additional Simulations Details}
The simulations are performed to show similar performance of our min-max, robust, direct data-driven controller with robust MPC (indirect) data-driven control~\citep{berberich2021}. Both algorithms are implemented in \texttt{MATLAB}, where PROP uses custom code for gradient descent, and DD--MPC uses {\texttt{YALMIP}} toolbox \fin{(\texttt{quadprog} solver)}, available at \url{https://yalmip.github.io}, to solve the problem as a quadratic programming problem. 

\paragraph{DD--MPC.} The DD--MPC algorithm proposes to solve the problem (subject to additional constraints):
\begin{equation*}
\begin{split}
    \min_{\alpha(t), \sigma(t), \bar{u}(t), \bar{y}(t)} & \quad \ell(\bar{u}_k(t), \bar{y}_k(t)) + \lambda_{\alpha} \bar{\epsilon} \|\alpha(t)\|_2^2 + \lambda_{\sigma} \| \sigma(t) \|_2^2  \\
    \textrm{subject to} & \quad \| \sigma_k(t) \|_{\infty} \leq \bar{\epsilon}(1+\|\alpha(t) \|_1)
\end{split}
\end{equation*}
where 
\[
    \ell(\bar{u}, \bar{u}) = \|\bar{u} - u^s \|_Q^2 + \| \bar{y} - y^s \|_R^2,
\]
for $Q,R \succ 0$, $(u^s, y^s)$ is the desired equilibrium, $\sigma$ is a slack variable to account for noisy measurements (robustness) and $\lambda_{\alpha}\bar{\epsilon}, \lambda_{\sigma} > 0$ are regularization weights which depend on the noise level.

\paragraph{Parameters.} The parameters for our simulations (PROP) are tabulated in Tables \ref{tab:prop_doub} and \ref{tab:prop_lap}.
\begin{table}[h]
    \centering
    \begin{minipage}{0.48\linewidth}
    \centering
    \begin{tabular}{c|c}
     \textbf{Parameter} & \textbf{Value} \\
     \hline 
    $T_{\textrm{ini}}$ & $10$ \\
    $T_{\textrm{f}}$ & $25$ \\
    $L$ & $35$ \\
    $n$ & $70$ \\
    $k$ & $37$ \\
    $\gamma$ & $4$ \\
    $\texttt{tolx}$ & $10^{-6}$ \\
    $\alpha$ & $0.1$ \\
    $T$ & $115$ \\
    \hline
    \end{tabular}
    \caption{Parameters for double integrator}
    \label{tab:prop_doub}
    \end{minipage}
    \begin{minipage}{0.48\linewidth}
    \centering
    \begin{tabular}{c|c}
     \textbf{Parameter} & \textbf{Value} \\
     \hline 
    $T_{\textrm{ini}}$ & $10$ \\
    $T_{\textrm{f}}$ & $25$ \\
    $L$ & $35$ \\
    $n$ & $140$ \\
    $k$ & $108$ \\
    $\gamma$ & $4$ \\
    $\texttt{tolx}$ & $10^{-6}$ \\
    $\alpha$ & $0.1$ \\
    $T$ & $150$ \\
    \hline
    \end{tabular}
    \caption{Parameters for Laplacian system.}
    \label{tab:prop_lap}
    \end{minipage}
\end{table}

Parameters for simulations of DD--MPC~\citep{berberich2021} are given in Tables \ref{tab:doub} and \ref{tab:lap}.

\begin{table}[h]
    \centering
    \begin{minipage}{0.48\linewidth}
    \centering
    \begin{tabular}{c|c}
     \textbf{Parameter} & \textbf{Value} \\
     \hline 
    $T_{\textrm{ini}}$ & $10$ \\
    $T_{\textrm{f}}$ & $25$ \\
    $L$ & $35$ \\
    $Q$ & $I_2$ \\
    $R$ & $0.5$ \\
    $\lambda_{\sigma}$ & $10^3$\\
    $T$ & $115$ \\
    \hline
    \end{tabular}
    \caption{Parameters for double integrator.}
    \label{tab:doub}
    \end{minipage}
    \begin{minipage}{0.48\linewidth}
    \centering
    \begin{tabular}{c|c}
     \textbf{Parameter} & \textbf{Value} \\
     \hline 
    $T_{\textrm{ini}}$ & $10$ \\
    $T_{\textrm{f}}$ & $25$ \\
    $L$ & $35$ \\
    $Q$ & $0.1 I_3$ \\
    $R$ & $5$ \\
    $\lambda_{\sigma}$ & $10^3$\\
    $T$ & $150$ \\
    \hline
    \end{tabular}
    \caption{Parameters for Laplacian system.}
    \label{tab:lap}
    \end{minipage}
\end{table}

\paragraph{Results.} The control inputs (PROP) for the Laplacian system corresponding to the outputs shown in Figure~\ref{fig:cons} are shown below in Figure~\ref{fig:optimal_inputs_lqr} for completeness. 
For both case studies, we present the optimization convergence plots for double integrator in Figure~\ref{fig:dint_conv} and for the Laplacian system in Figure~\ref{fig:lapl_conv}, by examining the performance of Algorithm~\ref{alg:cons-robust-ls} at some time-step during the simulation. 

\begin{figure}[h]
  \begin{center}
    \begin{tikzpicture}
      \begin{axis}[
          width=0.95\linewidth,
          height=0.25\linewidth,
          xlabel={},
          xticklabels={},
          ylabel=$u(t)$ ,
          xmin = 0, xmax = 100,
          axis x line* = top,
          axis y line* = left,
          legend style={at={(0.78,0.65)}, anchor=south, legend columns=-1, fill=white, fill opacity=0.6, draw opacity=1,text opacity=1}
        ]
        \draw[step=0.5cm,gray!30,thin] (0,-3) grid (100,2);
        \addplot[mark=*, mark size = 0.5, blue, thick] table[x=t,y=ucl1, col sep=comma]{data/case2_gamma4_rho2.csv};
        \addlegendentry{$u_1(t)$}
        \addplot[mark=*, mark size = 0.5, green, thick] table[x=t,y=ucl2, col sep=comma]{data/case2_gamma4_rho2.csv};
        \addlegendentry{$u_2(t)$}
        \addplot[mark=*, mark size = 0.5, orange, thick] table[x=t,y=ucl3, col sep=comma]{data/case2_gamma4_rho2.csv};
        \addlegendentry{$u_3(t)$}
      \end{axis}
        \begin{axis}[
          width=0.95\linewidth,
          height=0.25\linewidth,
          xlabel={},
          xticklabels={},
          ylabel={},
          yticklabels={},
          xmin = 0, xmax = 100,
          axis x line* = bottom,
          axis y line* = right,
          legend style={at={(0.75,1.05)}, anchor=south, legend columns=-1, fill=white, fill opacity=0.6, draw opacity=1,text opacity=1}
        ]
      \end{axis}  
    \end{tikzpicture}
    \bigskip
    \vspace{-0.8cm}
    \begin{tikzpicture}
      \begin{axis}[
          width=0.95\linewidth,
          height=0.25\linewidth,
          xlabel=$t$,
          ylabel=$u(t)$ ,
          ytick={0,0.2},
          yticklabels={$0$,$0.2$},
          xmin = 0, xmax = 100,
          axis x line* = bottom,
          axis y line* = left,
          legend style={at={(0.85,0.75)}}
        ]
 \addplot[mark=*, mark size = 0.5, blue, thick] table[x=t,y=ucl1, col sep=comma]{data/case2_gamma4_rho3p5.csv};
        \draw[step=0.5cm,gray!30,thin] (0,-3) grid (100,2);
        \addplot[mark=*, mark size = 0.5, green, thick] table[x=t,y=ucl2, col sep=comma]{data/case2_gamma4_rho3p5.csv};
        \addplot[mark=*, mark size = 0.5, orange, thick] table[x=t,y=ucl3, col sep=comma]{data/case2_gamma4_rho3p5.csv};
      \end{axis}
     \begin{axis}[
          width=0.95\linewidth,
          height=0.25\linewidth,
          xlabel={},
          ylabel={},
          yticklabels={},
          xticklabels={},
          xmin = 0, xmax = 100,
          axis x line* = top,
          axis y line* = right,
          legend style={at={(0.98,0.75)}}
        ]
      \end{axis}  
    \end{tikzpicture}
  \caption{Optimal control inputs for LQR of Laplacian system. Top: tracking for measurement noise level $\sigma = 0.1$. Bottom: tracking for measurement noise level $\sigma = 0.2$.}
  \label{fig:optimal_inputs_lqr}
  \end{center}
\end{figure}
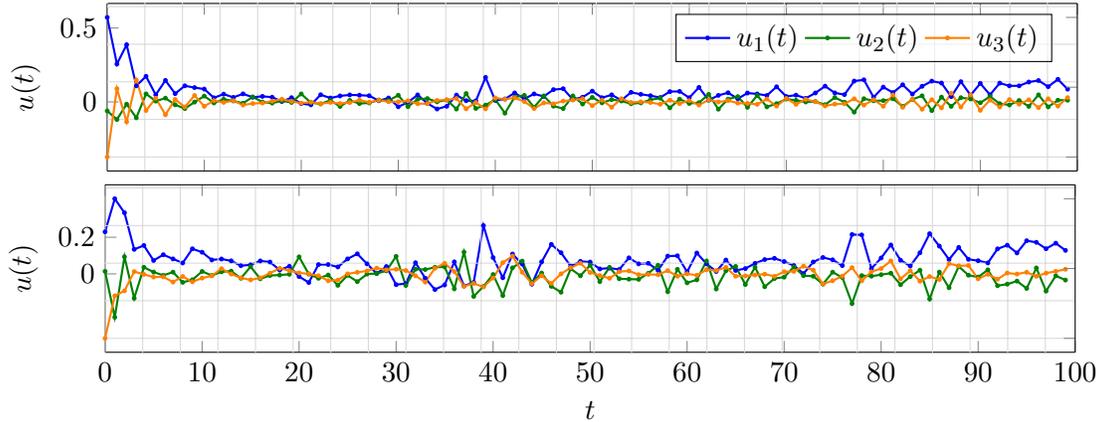

\begin{figure}[h]
\centering
\begin{tikzpicture}

\begin{groupplot}[
    group style={
        group size=3 by 2,
        horizontal sep=1cm,
        vertical sep=1cm
    },
    width=0.38\textwidth,
    height=0.32\textwidth,
    xlabel={},
    ylabel style={font=\small},
    xlabel style={font=\small},
    ticklabel style={font=\scriptsize},
    grid=both,
]

\nextgroupplot[ylabel={Cost}]
\addplot table[x=k0,y=cost0,col sep=comma] {data/case1_cost_gradnorm.csv};

\nextgroupplot[ylabel={}, ytick={0.07,0.08,0.09,0.1}, yticklabels={$0.07$,$0.08$,$0.09$,$0.1$}]
\addplot table[x=k1,y=cost1,col sep=comma] {data/case1_cost_gradnorm.csv};

\nextgroupplot[xlabel={},ylabel={}]
\addplot table[x=k2,y=cost2,col sep=comma] {data/case1_cost_gradnorm.csv};

\nextgroupplot[ylabel={\texttt{GRADNORM}},xlabel={$i$}, xmode=log, ymode=log]
\addplot[mark=*,mark size=0.5, black, thick] table[x=k0,y=grad0,col sep=comma] {data/case1_cost_gradnorm.csv};

\nextgroupplot[ylabel={},xlabel={$i$}, xmode=log, ymode=log]
\addplot[mark=*,mark size=0.5, black, thick] table[x=k1,y=grad1,col sep=comma] {data/case1_cost_gradnorm.csv};

\nextgroupplot[xlabel={$i$},ylabel={}, xmode=log, ymode=log]
\addplot[mark=*,mark size=0.5, black, thick] table[x=k2,y=grad2,col sep=comma] {data/case1_cost_gradnorm.csv};

\end{groupplot}
\end{tikzpicture}
\caption{Cost $f(x_i,\Sy^*(x_i))$ and gradnorm $\|\nabla_x f(x_i,\Sy^*(x_i)) \|$ evolution with iteration index $i$, at $t=50$. Left: (NOM), Center: (PROP) for $\sigma=0.1$, Right: (PROP) for $\sigma=0.2$.}
\label{fig:dint_conv}
\end{figure}

\begin{figure}[h]
\centering
\begin{tikzpicture}
\begin{groupplot}[
    group style={
        group size=3 by 2,
        horizontal sep=1cm,
        vertical sep=1cm
    },
    width=0.38\textwidth,
    height=0.32\textwidth,
    xlabel={},
    ylabel style={font=\small},
    xlabel style={font=\small},
    ticklabel style={font=\scriptsize},
    grid=both,
]

\nextgroupplot[ylabel={Cost}]
\addplot table[x=k0,y=cost0,col sep=comma] {data/case2_cost_gradnorm.csv};

\nextgroupplot[ylabel={}]
\addplot table[x=k1,y=cost1,col sep=comma] {data/case2_cost_gradnorm.csv};

\nextgroupplot[xlabel={},ylabel={}]
\addplot table[x=k2,y=cost2,col sep=comma] {data/case2_cost_gradnorm.csv};

\nextgroupplot[ylabel={\texttt{GRADNORM}},xlabel={$i$}, xmode=log, ymode=log]
\addplot[mark=*,mark size=0.5, black, thick] table[x=k0,y=grad0,col sep=comma] {data/case2_cost_gradnorm.csv};

\nextgroupplot[ylabel={},xlabel={$i$}, xmode=log, ymode=log]
\addplot[mark=*,mark size=0.5, black, thick] table[x=k1,y=grad1,col sep=comma] {data/case2_cost_gradnorm.csv};

\nextgroupplot[xlabel={$i$},ylabel={}, xmode=log, ymode=log]
\addplot[mark=*,mark size=0.5, black, thick] table[x=k2,y=grad2,col sep=comma] {data/case2_cost_gradnorm.csv};

\end{groupplot}
\end{tikzpicture}
\caption{Cost $f(x_i,\Sy^*(x_i))$ and gradnorm $\|\nabla_x f(x_i,\Sy^*(x_i)) \|$ evolution with iteration index $i$, at $t=10$. Left: (NOM), Center: (PROP) for $\sigma=0.1$, Right: (PROP) for $\sigma=0.2$.}
\label{fig:lapl_conv}
\end{figure}

\clearpage
Figure~\ref{fig:dint_constraints} shows the evolution of the constraint multiplier $\lambda^*$ for the double integrator. We observe that as the trajectories asymptotically track the reference, the penalty $\lambda^*$ decreases (roughly) asymptotically to zero. Similar trends are observed for the Laplacian system as seen in Figure~\ref{fig:lapl_constraints}.

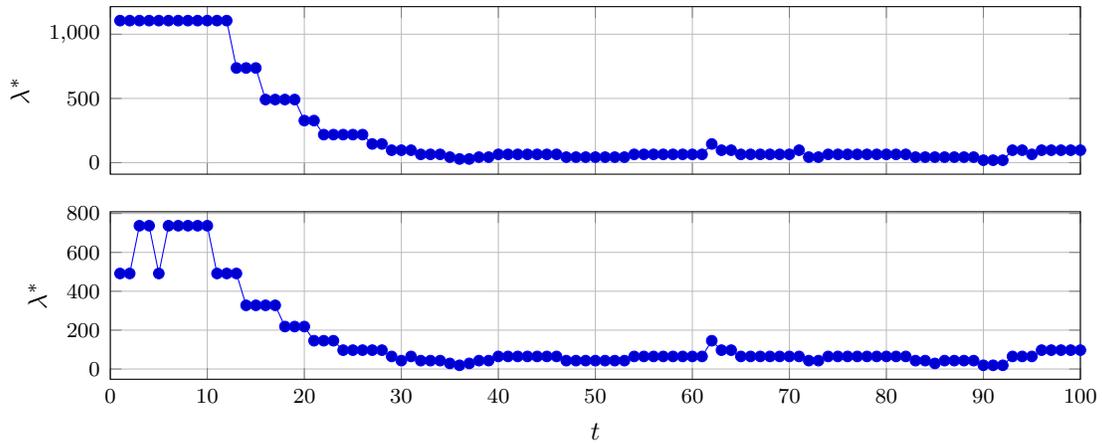
\begin{figure}[h]
\centering
\begin{tikzpicture}

\begin{groupplot}[
    group style={
        group size=1 by 2,
        horizontal sep=1cm,
        vertical sep=0.5cm
    },
    width=0.95\textwidth,
    height=0.25\textwidth,
    xlabel={},
    xmin=0, xmax=100,
    ylabel style={font=\small},
    xlabel style={font=\small},
    ticklabel style={font=\scriptsize},
    grid=both,
]

\nextgroupplot[ylabel={$\lambda^*$}, xticklabels={}]
\addplot table[x=k11,y=lambda11,col sep=comma] {data/case12_constraints.csv};

\nextgroupplot[ylabel={$\lambda^*$}, xlabel={$t$}]
\addplot table[x=k12,y=lambda12,col sep=comma] {data/case12_constraints.csv};

\end{groupplot}
\end{tikzpicture}
\caption{Constraint multiplier $\lambda^*$ at each time-step of the double-integrator reference tracking. Top and bottom figures corresponding to noise levels $\sigma=0.1$ and $\sigma=0.2$ respectively.}
\label{fig:dint_constraints}
\end{figure}

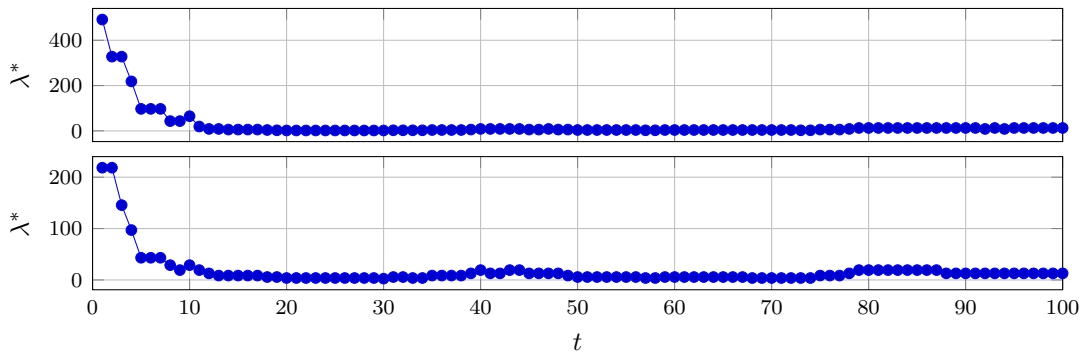
\begin{figure}[h]
\centering
\begin{tikzpicture}

\begin{groupplot}[
    group style={
        group size=1 by 2,
        horizontal sep=1cm,
        vertical sep=0.2cm
    },
    width=0.95\textwidth,
    height=0.22\textwidth,
    xlabel={},
    xmin=0, xmax=100,
    ylabel style={font=\small},
    xlabel style={font=\small},
    ticklabel style={font=\scriptsize},
    grid=both,
]

\nextgroupplot[ylabel={$\lambda^*$}, xticklabels={}]
\addplot table[x=k21,y=lambda21,col sep=comma] {data/case12_constraints.csv};

\nextgroupplot[ylabel={$\lambda^*$}, xlabel={$t$}]
\addplot table[x=k22,y=lambda22,col sep=comma] {data/case12_constraints.csv};

\end{groupplot}
\end{tikzpicture}
\caption{Constraint multiplier $\lambda^*$ at each time-step of the Laplacian system regulation. Top and bottom figures corresponding to noise levels $\sigma=0.1$ and $\sigma=0.2$ respectively.}
\label{fig:lapl_constraints}
\end{figure}

\subsection*{Comparison with TSRGDA \citep{coulson2025}}
We compare the convergence performance of our proposed algorithm with \citep{coulson2025} which uses timescale-separated Riemannian gradient descent-ascent (TSRGDA) algorithm to solve the robust least-squares problem. Here we only examine the robust least-squares problem \eqref{eq:least_squares_robust_geometric}. Consider subspaces in $\Gr(37,70)$,   $\rho = \sin(\pi/8)$ and
    $\hat{\Sy} = \Image \begin{bmatrix}
        I_{37} \\ 0_{33 \times 37}
    \end{bmatrix}$. The parameters used for TSRGDA are shown in Table \ref{tab:tsrgda}.

\begin{table}[h]
    \centering
    \begin{tabular}{c|c}
     \textbf{Parameter} & \textbf{Value} \\
     \hline 
    $\lambda$ & $1000$ \\
    $u$ & $0.1$ \\
    $\eta_x$ & $10^{-4}$ \\
    $\eta_y$ & $10^{-3}$ \\
    \hline
    \end{tabular}
    \caption{Parameters for TSRGDA.}
    \label{tab:tsrgda}
\end{table}

From Figure~\ref{fig:convergence_comp}, we observe that for the TSRGDA algorithm, it takes $\sim 10^5$ iterations for the gradnorm to reach within a tolerance of $10^{-5}$ whereas the proposed algorithm reaches it within $\sim 10^3$ iterations. It is also crucial to note that for the proposed algorithm, due to the explicit solution of $\Sy^*(x_i)$, we observe that $\|\grad_{\Sy} f(x_i,\Sy^*(x_i)) \| \sim 10^{-14}$ for all iterates $i=0,1,\ldots$. This is the crucial novelty of the proposed algorithm, wherein the min-max setup is elegantly converted to a convex optimization framework. The run time for the proposed method for the gradnorm (w.r.t $x$) to reach $\texttt{tolx} = 10^{-4}$ is $3.2$ seconds, whereas the TSRGDA method takes $25$ seconds. Increasing the step-sizes $\eta_x,\eta_y$ leads to negative effect on convergence. Thus, along with tractability and efficiency, the proposed method also guarantees convergence for $\alpha \in (0, 1]$.  

\begin{figure}[h]
\centering
\begin{tikzpicture}

\begin{groupplot}[
    group style={
        group size=1 by 1,
        horizontal sep=1cm,
        vertical sep=1cm
    },
    width=0.95\textwidth,
    height=0.42\textwidth,
    xlabel={},
    xmin=1, xmax=100000,
    ylabel style={font=\small},
    xlabel style={font=\small},
    ticklabel style={font=\scriptsize},
    grid=both,
    legend style={at={(0.48,1.05)}, anchor=south, legend columns=-1, fill=white, fill opacity=0.6, draw opacity=1,text opacity=1},
]

\nextgroupplot[xmode=log, ymode=log, ylabel={\texttt{GRADNORM}}]
\addplot[mark=square, red, mark size=2] table[x=k1,y=gradx1,col sep=comma] {data/comp_tsrgda_prop.csv};
\addlegendentry{$\|\nabla_x f(x,\Sy)\|^{\texttt{TSRGDA}}$}
\addplot[mark=square, blue, mark size=2] table[x=k1,y=grady1,col sep=comma] {data/comp_tsrgda_prop.csv};
\addlegendentry{$\|\grad_\Sy f(x,\Sy) \|^{\texttt{TSRGDA}}$}

\addplot[mark=diamond, red, mark size=3] table[x=k2,y=gradx2,col sep=comma] {data/comp_tsrgda_prop.csv};
\addlegendentry{$\|\nabla_x f(x,\Sy)\|^{\texttt{PROP}}$}
\addplot[mark=diamond, blue, mark size=3] table[x=k2,y=grady2,col sep=comma] {data/comp_tsrgda_prop.csv};
\addlegendentry{$\|\grad_\Sy f(x,\Sy) \|^{\texttt{PROP}}$}

\end{groupplot}
\end{tikzpicture}
\caption{Gradnorm comparison between TSRGDA and proposed method (PROP).}
\label{fig:convergence_comp}
\end{figure}
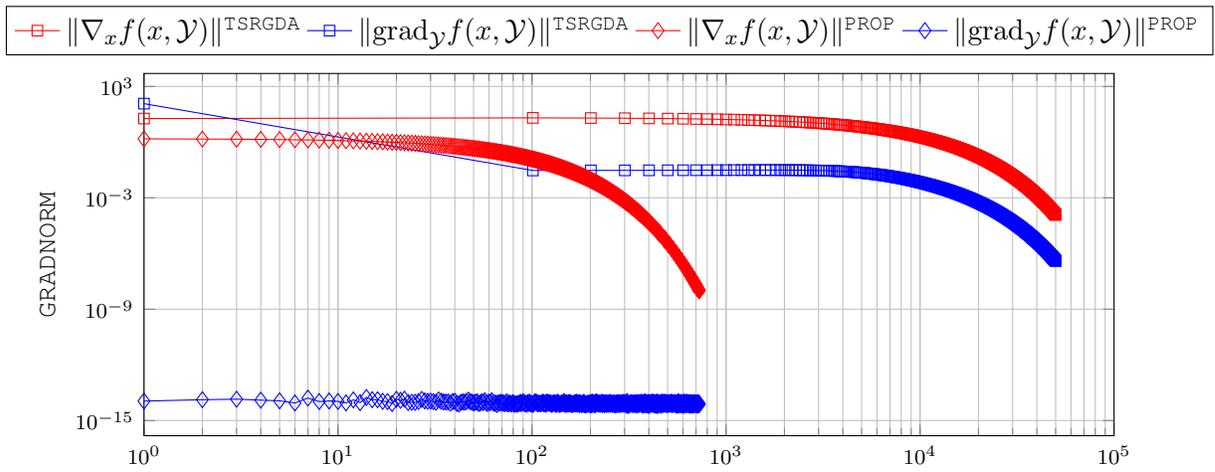 
\end{document}